\newtheorem{theorem}{Theorem}[section]
\newtheorem{proposition}[theorem]{Proposition}
\newtheorem{lemma}[theorem]{Lemma}
\newtheorem{example}[theorem]{Example}
\newcommand{\cs}{\mathcal{C}}               % configuration space
\newcommand{\pcs}{\mathcal{C}^I}  % paths in configuration space
\newcommand{\ws}{\mathcal{W}}               % working space
\newcommand{\TC}{\mathrm{TC}}  
\newcommand{\cat}{\mathrm{cat}} 
\newcommand{\g}{\mathsf{g}}
\newcommand{\Ker}{\mathord{\mathrm{Ker}}}
\newcommand{\Id}{\mathord{\mathrm{Id}}}
\newcommand{\csec}{\mathord{\mathrm{csec}}}
\newcommand{\nil}{\mathord{\mathrm{nil}}}
\newcommand{\RR}{\mathord{\mathbb{R}}}
\newcommand{\ZZ}{\mathord{\mathbb{Z}}}
\title{A Topologist's View of Kinematic Maps and Manipulation Complexity}
\author[Petar Pave\v si\'c]{Petar Pave\v si\'c}
\address{Faculty of Mathematics and Physics, University of Ljubljana, Ljubljana, Slovenia}
\email{\rm{petar.pavesic@fmf.uni-lj.si}}
\thanks{The author was partially supported by the Slovenian Research Agency grant P1-02920101.}
\keywords{topological complexity, robotics, kinematic map}
\subjclass[2010]{}
\begin{document}
\begin{abstract}
In this paper we combine a survey of the most important topological properties of kinematic maps that appear in robotics,
with the exposition of some basic results regarding the topological complexity of a map. In particular, we discuss 
mechanical devices that consist of rigid parts connected by joints and show how the geometry of the joints determines 
the forward kinematic map that relates the configuration of joints with the pose of the end-effector of the device. 
We explain how to compute the dimension of the joint space and describe topological obstructions for a kinematic
map to be a fibration or to admit a continuous section. In the second part of the paper we define the complexity 
of a continuous map
and show how the concept can be viewed as a measure of the difficulty to find a robust manipulation plan for 
a given mechanical 
 device. We also derive some basic estimates for the complexity and relate it to the degree of instability of a manipulation
plan.
\end{abstract}
\maketitle

%============================================================================================================================================================
\section{Introduction}
%============================================================================================================================================================
\label{sec:Introduction}

\emph{Motion planning} is one of the basic tasks in robotics: it requires finding a suitable continuous motion 
that transforms (or moves) a robotic device from a given initial position to a desired final position. A motion plan 
is usually required to be \emph{robust}, i.e. the path of the robot must be a continuous function of the input data 
given by the initial and final position of the robot. 
Michael Farber \cite{Farber:TCMP} introduced the concept of \emph{topological complexity} as a   
measure of the difficulty finding continuous motion plans that yield robot trajectories for every admissible pair 
of initial and final points (see also \cite{Farber:ITR}). 
However, finding a suitable trajectory for a robot is only part of the problem, because
robots are complex devices and one has to decide how to move each  robot component so that the entire device moves along 
the required  trajectory. To tackle this more general
\emph{robot manipulation problem} one must take into account the relation between the internal states of robot parts 
(that form the so-called \emph{configuration space} of the robot) and the actual pose of the robot within its
\emph{working space}. The relation between the internal states and the poses of the robot is given by the \emph{forward 
kinematic map}. In \cite{Pav:CFKM} we defined the \emph{complexity of the forward kinematic map} as a measure of 
the difficulty to find robust manipulation plans for a given robotic device.

We begin the paper with a survey of basic concepts of robotics which is intended as a motivation and background information
for various problems that appear in the study of topological complexity of configuration spaces and kinematic maps. 
Section \ref{sec:Robot kinematics} contains a brief exposition of  standard mathematical topics in robotics: description 
of the position and
orientation of rigid bodies, classification of joints that are used to connect mechanism
parts, definition of configuration and working spaces and determination of the mechanism kinematics based on the motion 
of the joints. Our exposition is by no means complete, as we limit our attention to concepts that appear in 
 geometrical and topological questions. More technical details can be found in standard books on robotics, like 
\cite{RS}, \cite{MIRM} or \cite{Waldron-Schmiedeler}.
In Section \ref{sec:Topological properties} we consider the  properties of kinematic maps that are relevant to the study 
of topological 
complexity (cf. \cite{Pav:CFKM}). In particular, we determine the dimension of the configuration space, 
discuss when a kinematic map admits a continuous section (i.e. inverse kinematic map) and when a given kinematic map 
is a fibration. We also mention some questions that arise in the kinematics of redundant manipulators. 
The results presented in Section \ref{sec:Topological properties} 
are not our original contribution, but we have made an effort to
give a unified exposition of relevant facts scattered in the literature, and to help 
a topologically minded reader to familiarize herself or himself with aspects of robotics that have motivated 
some recent work on topological complexity. 

In the second part of the paper we recall some basic facts about topological complexity 
in Section \ref{sec:TC overview} and then in Section \ref{sec:Complexity of a map} we 
introduce the relative version, the complexity of a continuous map. We discuss some subtleties in the definition 
of complexity, derive one basic estimate and present several possible applications. In Section 
\ref{sec:Instability of robot manipulation} we relate the instabilities (i.e. discontinuities) that appear in 
the manipulation of a mechanical system with the complexity of its kinematic map.

%============================================================================================================================================================
\section{Robot kinematics}
%============================================================================================================================================================
\label{sec:Robot kinematics}

In order to keep the discussion reasonably simple, we will restrict our attention to mechanical aspects of robotic
kinematics and disregard questions 
of adaptivity and communication with humans or other robots. We will thus view a robot as a mechanical
device with rigid parts connected by joints. Furthermore, we will not take into account forces or 
torques as these concepts properly belong to robot \emph{dynamics}.

A robot device consists of rigid components connected by joints that allow its parts to change their relative positions. 
To give a mathematical model of robot motion
we need to describe the position and orientation of individual parts, determine the motion restrictions caused 
by various types of joints, and compute the functional relation between the states of individual joints 
and the position and orientation of the end-effector. 

%------------------------------------------------------------------------------------------------------------------------------------------------------------
\subsection{Pose representations}
%------------------------------------------------------------------------------------------------------------------------------------------------------------

The spatial description of each part of a robot, viewed as a rigid body,  is given by its \emph{position} and \emph{orientation},
which are collectively called \emph{pose}. The position is usually given by specifying a point in $\RR^3$ 
occupied by some reference point in the robot part, and the orientation is given by an element of $SO(3)$. 
Therefore, as $\RR^3\times SO(3)$ is 6-dimensional, we need at least six coordinates to precisely locate each robot 
component in Euclidean space. The representation
of the position is usually straightforward in terms of cartesian or cylindrical coordinates, but the explicit 
description of the orientation is more complicated.
Of course, we may specify an element in $SO(3)$ by a $3\times 3$ matrix, but that requires a list of 9 coefficients
that are subject to 6 relations. Actually, 3 equations are redundant because the matrix is symmetric, 
while the remaining 3 are quadratic and involve 
all coefficients. This considerably complicates computations involving relative positions of various parts,
so most robotics courses begin with a lengthy discussions of alternative representations of rotations.
This includes description of elements of $SO(3)$ as compositions of three rotations around coordinate axes
(\emph{fixed angles} representation), or by rotations around changing axes (\emph{Euler angles} representation),
or by specifying the axes and the angle of the rotation (\emph{angle-axis} representation). While these representations
are more efficient in terms of data needed to specify a rotation, explicit formulas always have singularities,
where certain coefficients are undefined. This is hardly surprising, as we know that $SO(3)$ cannot be 
parametrized by a single 3-dimensional chart. Other explicit descriptions of elements of $SO(3)$ include those 
by quaternions and by
matrix exponential form. See \cite{RS}, \cite{Waldron-Schmiedeler} for more details and transition formulas between 
different representations of spatial orientation.

The pose of a rigid body corresponds to an element of the special Euclidean group $SE(3)$, which can 
be identified with the semi-direct product of $\RR^3$ with $SO(3)$. Its elements admit a \emph{homogeneous representation} 
by $4\times 4$-matrices of the form 
$$\left(\begin{array}{cc}
R & \vec t\\
\vec 0^T & 1 \end{array}\right)$$
where $R$ is a $3\times 3$ special orthogonal matrix representing a rotation and $\vec t$ is a 3-dimensional 
vector representing a translation.
The main advantage of this representation is that composition of motions is given by the multiplication of corresponding 
matrices. Another frequently used representation of the rigid body motion is by \emph{screw transformations}. It is based 
on the Chasles theorem which states that the motion given by a rotation around an axis passing through the center 
of mass, followed by 
a translation can be obtained by a screw-like motion given by simultaneous rotation and translation along a common axis 
(parallel to the previous). See \cite[section 1.2]{Waldron-Schmiedeler} 
for more details about joint position and orientation representations.

Explicit representations of rigid body pose may be complicated but are clearly unavoidable when it 
comes to numerical computations.
Luckily, topological considerations mostly rely on geometric arguments and rarely involve explicit formulae.

%------------------------------------------------------------------------------------------------------------------------------------------------------------
\subsection{Joints and mechanisms}
%------------------------------------------------------------------------------------------------------------------------------------------------------------

Rigid components of a robot mechanism are connected by \emph{joints}, i.e. parts of the components' surfaces that are in 
contact with each other. The geometry of the contact surface restricts the relative motion of the components connected
by a joint. Although most robot mechanisms employ only two basic kinds of joints, we will briefly describe 
a general classifications of various joint types. First of all, two objects can be in contact along a proper
surface (these are called \emph{lower pair} joint), along a line, or even along isolated points (in the case of 
\emph{upper pair} joints). 
There are six basic types of lower pair joints, the most important being the first three: 

\emph{Revolute joints:} the contact between the bodies is along a surface of revolution, which allows rotational motion.
Revolute joints are usually abbreviated by (R) and the corresponding motion has one degree of freedom (1 DOF).

\emph{Prismatic joints:}the bodies are in contact along a prismatic surface. Prismatic joints are abbreviated as (P) and 
admit a rectilinear motion with one degree of freedom.

\emph{Helical joints:} the bodies are in contact along a helical surface. Helical joints are abbreviated as (H) and allow screw-like 
motion with one degree of freedom. 

\begin{figure}[ht]
    \centering
    \includegraphics[scale=0.5]{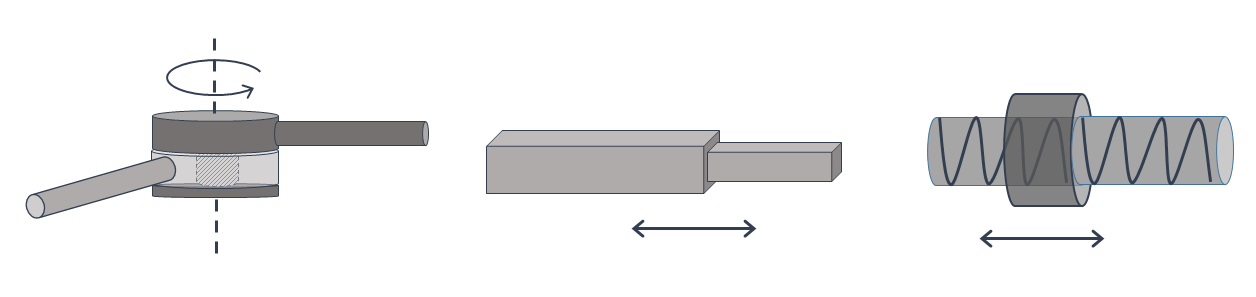}
    \caption{Revolute (R), prismatic (P) and helical (H) joints.}
    \label{fig: RPH joints}
\end{figure} 

\emph{Cylindrical joints:}, denoted (C), where the bodies are in contact along a cylindrical surface. They allow simultaneous sliding and 
rotation, so the corresponding motion has two degrees of freedom.

\emph{Spherical joints:}, denoted (S), with bodies in contact along a spherical surface and allowing motion with three degrees of freedom.

\emph{Planar joints:}, denoted (E) with contact along a plane with three degrees of freedom (plane sliding and rotation).

\begin{figure}[ht]
    \centering
    \includegraphics[scale=0.5]{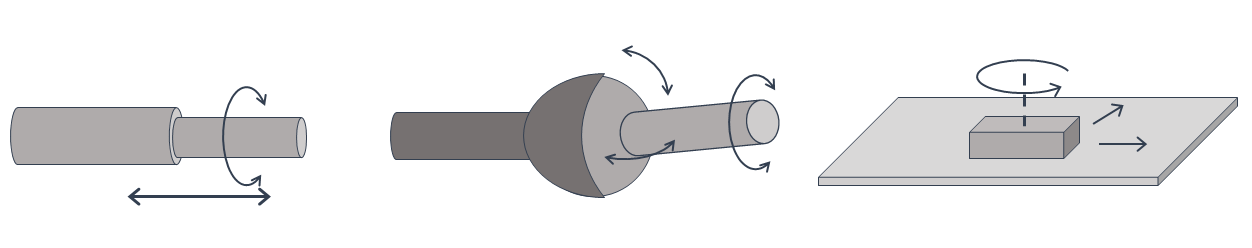}
    \caption{Cylindrical (C), spherical (S) and planar (E) joints.}
    \label{fig: CSP joints}
\end{figure} 

While the revolute, prismatic and helical joints can be easily actuated by motors or pneumatic cylinders, this is not 
the case for the remaining three types, because they have two or three degrees of freedom and each degree of freedom 
must be separately 
actuated. As a consequence, they are used less frequently in robotic mechanisms and almost exclusively as passive 
joints that restrict the motion of the mechanism. 

Higher pair joints are also called rolling joints, being characterized by a one-dimensional contact between the bodies, 
like a cylinder rolling on a plane, or by zero-dimensional contact, like a sphere rolling on a surface. They too appear
only as passive joints.

A complex of rigid bodies, connected by joints which as a whole allow at least one degree of freedom, forms 
a \emph{mechanism} (if no movement is possible, it is called a \emph{structure}). 
A mechanism is often schematically described by a graph whose vertices
are the joints and edges correspond to the components. The graph may be occasionally complemented with symbols 
indicating the type of each joint 
or its degree of freedom. A manipulator whose graph is a path is called \emph{serial chain}. 
This class is sometimes extended to
include manipulators with tree-like graphs as in robot hands with fingers or in some gripping mechanisms. A serial 
manipulator necessarily  contains only actuated joints and is often codified by listing the symbols for its joints. 
For example (RPR) denotes a chain in which the first joint is revolute, second is prismatic and the third is 
again revolute. Typical serial chains are various kinds of robot arms.

Manipulators whose graphs contain one or more cycles are \emph{parallel}. Typical parallel mechanisms are various
lifting platforms (see Figure \ref{fig: ser-par}). We will see later that the kinematics of serial mechanisms is quite 
different from the kinematics of parallel mechanisms and requires different methods of analysis. 

\begin{figure}[ht]
    \centering
    \includegraphics[scale=0.5]{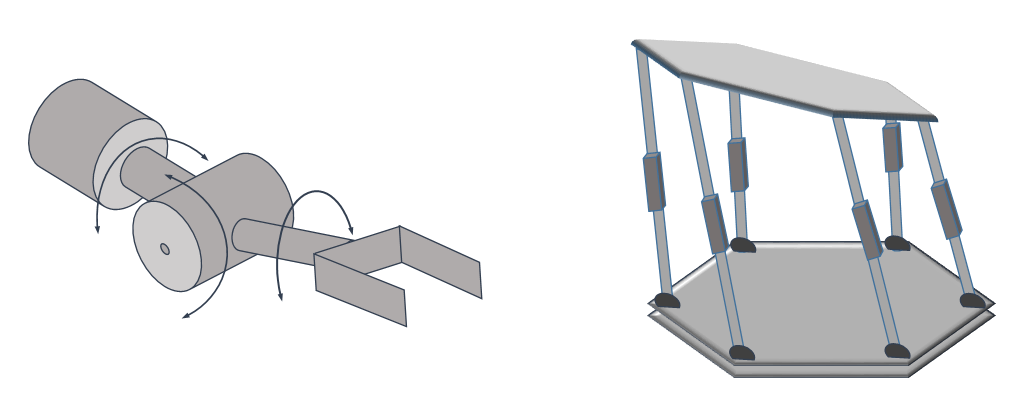}
    \caption{Serial (RRR) robot 'triple-roll wrist' and parallel Stewart platform mechanism where each leg is of type (SPS).}
    \label{fig: ser-par}
\end{figure}

%------------------------------------------------------------------------------------------------------------------------------------------------------------
\subsection{Kinematic maps}
%------------------------------------------------------------------------------------------------------------------------------------------------------------

Let us consider a simple example -- a pointing mechanism with two revolute joints as in Figure \ref{fig: pointing}. 
Since each of the joints can rotate a full circle, we can specify their position by giving two angles, or  better, two  
points on the unit circle $S^1$. Every choice of angles uniquely determines the longitude and the latitude of a point on 
the sphere. Thus we obtain an elementary example of a kinematic mapping $f\colon S^1\times S^1\to S^2$, explicitly given
in terms of geographical coordinates as $f(\alpha,\beta)=(\cos\alpha\cos\beta,\cos\alpha\sin\beta,\sin\alpha)$,
so that $\alpha$ is the latitude and $\beta$ is the longitude of a point on the sphere.

\begin{figure}[ht]
    \centering
    \includegraphics[scale=0.5]{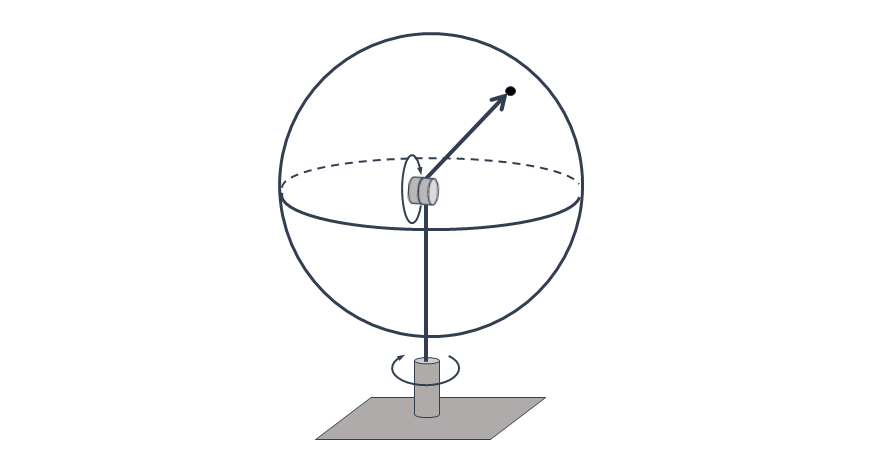}
    \caption{Pointing (RR) mechanism.}
    \label{fig: pointing}
\end{figure}

Given two bodies connected by a joint $J$, we define the \emph{joint space} of $J$ as the subspace (usually a submanifold)
of $SE(3)$ that correspond to all possible relative displacements of the two bodies. So the joint space of a revolute joint
is (homeomorphic to) $S^1$, joint spaces of prismatic and helical joints are closed segments $B^1$, joint space of 
a cylindrical
joint is $B^1\times S^1$, and the joint spaces of spherical and planar joints are $B^2\times S^1$ (note that theoretically 
a spherical joint should have the entire $SO(3)$ as a joint space, but such a level of mobility cannot be technically achieved). 

The \emph{Joint space} of a robot manipulator $\mathcal{M}$ is the product of the joint spaces of its joints. 
Its \emph{configuration space} $\cs(\mathcal{M})$ is the subset  of the joint space of $\mathcal{M}$, consisting of 
values for the joint variables that satisfy all constraints determined by a geometrical realization of the manipulator. 

The component of a manipulator  that performs a desired task is called an \emph{end-effector}. 
The \emph{kinematic mapping} for $\mathcal{M}$ is the function $f\colon \cs(\mathcal{M})\to SE(3)$ that to every admissible
configuration of joints assigns the pose of the end-effector. The image of the kinematic mapping is called 
the \emph{working space}
of $\mathcal{M}$ and is denoted $\ws(\mathcal{M})$. Often we only care about the position (or orientation) of 
the end-effector
and thus consider just the projections of the working space to $\RR^3$ (or $SO(3)$). The \emph{inverse kinematic mapping}
for $\mathcal{M}$ is a right inverse (section) for $f$, i.e. a function $s\colon\ws(\mathcal{M})\to\cs(\mathcal{M})$,
satisfying $f\circ s=\Id_{\ws(\mathcal{M})}$. We will see later that many kinematic maps (especially for serial chains) 
do not admit continuous inverse kinematic maps. 

In order to study or manipulate a robot mechanism one must explicitly compute its forward kinematic map. 
To this end it is necessary to describe 
the poses of the joints, and this cannot be done in absolute terms because the movement of each joint can 
change the pose of other joints. For a serial chain it makes sense to specify the pose of each joint relatively to 
the previous joint in
the chain. In other words, we can fix a reference frame for each joint in the chain, and then form a list starting with 
the pose of 
the first joint, followed by the difference between the poses of the first joint and the second joint, followed by 
the difference between 
the poses of the second joint and the third joint, and so on. Of course, each difference is an element of $SE(3)$, hence 
it is determined
by six parameters. However, by a judicious choice of reference frames one can reduce this to just four parameters for 
each difference, with an additional bonus of a very simple computation of the joint poses. This method, introduced 
by Denavit and Hartenberg \cite{D-H}, is the most widely used approach for the computation of the forward kinematic map 
for serial chains, so  deserves to be described in some detail. 

Assume we are given a serial chain with links and revolute joints as in Figures \ref{fig: SCARA} and 
\ref{fig: serial}. For the first joint we 
let the $z$-axis be the axis of rotation of the joint, and choose the $x$-axis and $y$-axis so to form 
a right-handed orthogonal frame. We may fix the frame
origin within the joint but that is not required, and indeed the other frame origins usually do not coincide 
with the positions of the respective joints. 
Given the frame $\mathbf{x},\mathbf{y},\mathbf{z}$ for the $i$-th joint, the frame $\mathbf{x'},\mathbf{y'},\mathbf{z'}$
 for the $(i+1)$-st joint is chosen as follows:
\begin{figure}[ht]
    \centering
    \includegraphics[scale=0.5]{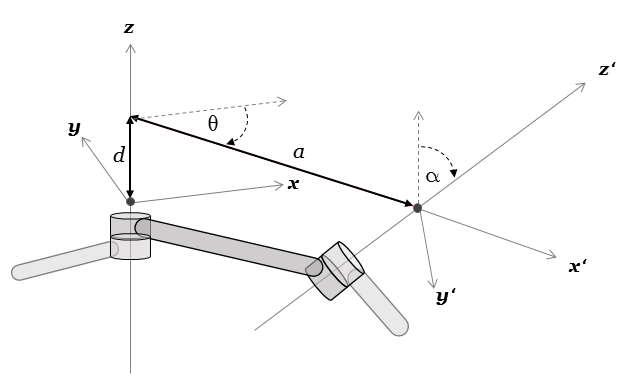}
    \caption{Denavit-Hartenberg parameters and frame convention.}
    \label{fig: DH}
\end{figure}
\begin{itemize}
\item $\mathbf{z'}$ is the axis of rotation of the joint;
\item $\mathbf{x'}$ is the line containing the shortest segment connecting $\mathbf{z}$ and $\mathbf{z'}$ (its
direction is thus $\mathbf{z'}\times \mathbf{z}$); 
if $\mathbf{z}$ and $\mathbf{z'}$ are parallel, we usually
choose the line through the origin of the $i$-th frame:
\item $\mathbf{y'}$ forms a right-handed orthogonal frame with $\mathbf{x'}$ and $\mathbf{z'}$.
\end{itemize}
The relative position of the frame $\mathbf{x'},\mathbf{y'},\mathbf{z'}$ with respect to the frame 
$\mathbf{x},\mathbf{y},\mathbf{z}$ is given by four Denavit-Hartenberg 
parameters $d,\theta,a,\alpha$ (see Figure \ref{fig: DH}), where 
\begin{itemize}
\item $d$ and $\theta$ are the distance and the angle between the axes $\mathbf{x}$ and $\mathbf{x'}$ 
\item $a$ and $\alpha$ are the distance and angle between the axes $\mathbf{z}$ and $\mathbf{z'}$. 
\end{itemize}
Using the above procedure, 
one can describe the structure of a serial chain with 
$n$ joints by giving the initial frame and a list of $n$ quadruplets of Denavit-Hartenberg parameters. 
Moreover, Denavit-Hartenberg approach can be easily extended to 
handle combinations of prismatic and revolute joints, and to take into account some exceptional configurations, 
e.g. coinciding axes of rotation. 

Once the structure of the serial chain is coded in terms of Denavit-Hartenberg parameters it is not difficult 
to write explicitly the corresponding kinematic map as a product 
of rotation and translation matrices. It is important to note that, for each joint $\theta$ is precisely the joint
parameter describing the joint rotation by the angle $\theta$. 
We omit the tedious computation and just mention that the kinematic map of a robot arm with $n$ 
revolute joints is a product of 
$n$ Denavit-Hartenberg matrices of the form
$$\left(\begin{array}{cccc} 
\cos\theta  &  -\cos\alpha\sin\theta & \sin\alpha \sin\theta & a\cos\theta \\
\sin\theta  &  \cos\alpha\cos\theta & -\sin\alpha \cos\theta & a\sin\theta \\  
         0   &  \sin\alpha             & \cos\alpha             & d \\
         0   &       0                 &               0        &  1
\end{array}\right)$$
where the parameters range over all joints in the chain.

%============================================================================================================================================================
\section{Topological properties of kinematic maps}
%============================================================================================================================================================
\label{sec:Topological properties}

In this section we present a few results of topological nature concerning configuration spaces, working spaces and kinematic
 maps. Although the complexity can be defined for any continuous map,
these results hint at additional conditions that can be reasonably imposed in concrete applications, 
and conversely, show that some common topological assumptions 
(e.g. that the kinematic is a fibration) can be too much to ask for in practical applications.

%------------------------------------------------------------------------------------------------------------------------------------------------------------
\subsection{Mobility} 
%------------------------------------------------------------------------------------------------------------------------------------------------------------

In order to form a \emph{mechanism} a set of bars and joints must be mobile, otherwise it is more properly called 
a \emph{structure}. The \emph{mobility} of a robot mechanism is usually defined to be the number of its degrees of 
freedom. In more mathematical terms, we can identify mobility as the dimension of the configuration space (at least 
when $\cs$ is 
a manifold or an algebraic variety). The mobility of a serial mechanism is easily determined: it is the sum of the 
degrees of freedom of its joints (which usually coincides with the number of joints,
because actuated joints have one degree of freedom). 
In parallel mechanisms links that form cycles reduce the mobility of the mechanism. Assume that a mechanism consists 
of $n$ moving bodies that are connected
directly or indirectly to a fixed frame. If they are allowed to move independently in the space then 
the configurations space is $6n$-dimensional. Each joint introduces some constraints and generically 
reduces the dimension of the configuration space by $6-f$, where $f$ is the degree of freedom of the joint. Therefore, 
if there are $g$ joints whose degrees of freedom are $f_1,f_2,\ldots,f_g$, and if they
are independent, then the mobility of the system is 
$$M=6n-\sum_{i=1}^g (6-f_i)= 6(n-g)+\sum_{i=1}^g f_i.$$
This is the so called \emph{Gr\"ubler formula} (sometimes called Chebishev-Gr\"ubler-Kutzbach formula).
If the mechanism is planar, then the mobility of each body is 3-dimensional (two planar coordinates and the plane rotation), and each joint introduces $3-f$ constraints. The corresponding planar Gr\"ubler 
formula gives the mobility as 
$$M= 3(n-g)+\sum_{i=1}^g f_i.$$
For example, in a simple planar linkage with four links (of which one is fixed), connected by four revolute joints the mobility is $M=3\times(3-4)+4=1$. 
\begin{figure}[ht]
    \centering
    \includegraphics[scale=0.5]{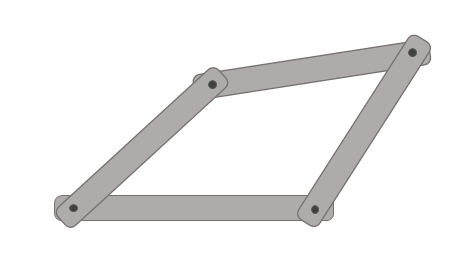}
    \caption{Four-bar linkage with mobility 1.}
    \label{fig: four bars}
\end{figure}
Observe however that the Gr\"ubler formula relies on the assumption that the constraints are independent. 
In more complicated mechanisms relations between motions of adjacent joints may lead to redundant degrees of freedom 
in the sense that some motions are always related. For example, in a (SPS) configuration, with a prismatic joint between
two spherical joints, a rotation of one spherical joint is transmitted to an equivalent 
rotation of the other spherical joint. Thus the resulting degree of freedom is not $7=3+1+3$ but $6$. 
For example, in the Stewart platform shown in Figure \ref{fig: ser-par} there is the fixed base, 13 mobile links, and 
18 joints, of which 6 in the struts are prismatic with one degree  of freedom,
and the remaining 12 at both platforms have three degrees of freedom each. Thus by the Gr\"ubler formula, the mobility of
the Stewart platform should be equal to
$M=(13-18)\times 6+12\times 3+6\times 1=12$, but in fact each leg has one redundant degree of freedom, so the mobility 
of the Stewart platform is 6. Observe that to achieve all positions in the configuration
space, one must actuate at least $M$ joints (assuming that only joints with one degree of freedom are actuated). In fact, 
in the Stewart platform the six prismatic joints are actuated, while the spherical joints are passive.

%------------------------------------------------------------------------------------------------------------------------------------------------------------
\subsection{Inverse kinematics}
%------------------------------------------------------------------------------------------------------------------------------------------------------------

A crucial step in robot manipulation is the determination of a configuration of joints that realizes a given pose in 
the working space. In other words, we need to find an inverse kinematic map
in order to reduce the manipulation problem to a navigation problem within $\cs$.  However, very often
we must rely on partial inverses because there are topological obstructions for the existence of an inverse kinematic map
defined on the entire working space $\ws$. The following result is due to Gottlieb \cite{Gottlieb:RFB}:

\begin{theorem}
\label{sections}
A continuous map $f\colon (S^1)^n\to \ws$ where $\ws=S^2, SO(3)$ or $SE(3)$ does not admit a continuous section.
\end{theorem}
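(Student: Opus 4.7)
The plan is to rule out a section by applying the section condition $f\circ s=\Id_\ws$ to a suitable functor: since any such section makes $s_\ast$ a split monomorphism on any homotopy or homology group, I just need an invariant $G(\ws)$ that cannot inject into $G((S^1)^n)$. The torus $(S^1)^n$ is a $K(\ZZ^n,1)$, so its entire homotopy machinery is concentrated in degree one, with $\pi_1((S^1)^n)=\ZZ^n$ (torsion-free) and $\pi_k((S^1)^n)=0$ for $k\ge 2$. This rigidity will kill each of the three candidate targets.

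First I would handle $\ws=SO(3)$. If a continuous section $s$ existed, the induced map $s_\ast\colon\pi_1(SO(3))\to\pi_1((S^1)^n)$ would be split injective. But $\pi_1(SO(3))=\ZZ/2$ and $\ZZ^n$ is torsion-free, so no nonzero homomorphism $\ZZ/2\to\ZZ^n$ exists, let alone a monomorphism. For $\ws=SE(3)$ the same argument applies verbatim, once I observe that $SE(3)\cong\RR^3\rtimes SO(3)$ deformation-retracts onto its $SO(3)$ factor, so $\pi_1(SE(3))\cong\ZZ/2$ as well.

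The case $\ws=S^2$ needs a different degree since $\pi_1(S^2)=0$. Here I would pass to $\pi_2$: a section would force $s_\ast\colon\pi_2(S^2)\to\pi_2((S^1)^n)$ to be a split monomorphism, but the target vanishes while the source is $\ZZ$, giving the contradiction.

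The only mildly subtle step is recognising that one invariant (namely $\pi_1$) suffices for $SO(3)$ and $SE(3)$ while $S^2$ demands moving up to $\pi_2$; everything else is formal from $f\circ s=\Id$ and the Eilenberg-MacLane nature of $(S^1)^n$. No further computation with the specific kinematic map is required, which is in fact the strength of the result: it is a purely topological obstruction depending only on the homotopy type of the domain and codomain.
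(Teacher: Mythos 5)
Your proposal is correct and follows essentially the same route as the paper: apply $\pi_1$ to rule out sections for $SO(3)$ and $SE(3)$ (the identity on $\ZZ_2$ cannot factor through the torsion-free group $\ZZ^n$) and $\pi_2$ for $S^2$ (the identity on $\ZZ$ cannot factor through $0$). The only difference is cosmetic: you explicitly note that $SE(3)$ deformation-retracts onto $SO(3)$, whereas the paper simply records $\pi_1(SE(3))=\ZZ_2$.
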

\begin{proof}
A continuous map $s\colon \ws\to (S^1)^n$, such that $f\circ s=\Id_\ws$ induces a homomorphism between fundamental 
groups $s_\sharp\colon\pi_1(\ws)\to\pi_1(\cs)$ satisfying $f_\sharp\circ s_\sharp=\Id$. However, the identity on
torsion groups 
for $\pi_1(SO(3))=\pi_1(SE(3))=\ZZ_2$ cannot factor through the free abelian group $\pi_1((S^1)^n)=\ZZ^n$. 

Similarly, by applying the second homotopy group functor, we conclude that the identity on $\pi_2(S^2)=\ZZ$ cannot 
factor through $\pi_2((S^1)^n)=0$.
\end{proof}

As a consequence, if one wants to use a serial manipulator to move the end-effector in a spherical space around the 
device, or to control a robot arm that is able to 
assume any orientation, then the computation of joint configurations that yield a desired position or orientation 
requires a partitioning of the working space into subspaces
that admit inverse kinematics. This explains the popularity of certain robot configurations that avoid this problem. 
A typical example is the SCARA ('Selective Compliance Assembly Robot Arm') design. 
\begin{figure}[ht]
    \centering
    \includegraphics[scale=0.5]{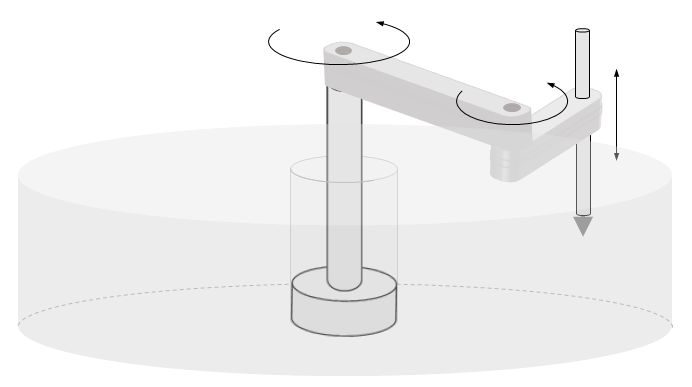}
    \caption{SCARA working space.}
    \label{fig: SCARA}
\end{figure}
Its working space is a doughnut-shaped region homeomorphic to $S^1\times I\times I$, so 
the previous theorem does not apply. Indeed, it is not difficult to obtain an inverse kinematic map for 
the SCARA robot arm.

A similar question arises if one attempts to write an explicit formula to compute
the axes of rotations in $\RR^3$.
It is well-known that for every non-identity rotation of $\RR^3$ there is a uniquely defined axis of rotation, viewed 
as an element of $\RR P^2$. For programming 
purposes it would be useful to have an explicit formula that to a matrix  $A\in SO(3)-I$  assigns a vector 
$(a,b,c)\in\RR^3$ determining the axis of the rotation represented by $A$. 
But that would amount to a factorization of the axis map $SO(3)-I\to\RR P^2$ 
through some continuous map 
$f\colon  SO(3)-I\to\RR^3-\mathbf{0}$, which cannot be done, because the axis map induces an isomorphism on 
$\pi_1(SO(3)-I)=\pi_1(\RR P^2)=\ZZ_2$, while $\pi_1(\RR^3-\mathbf{0})=0$.

%------------------------------------------------------------------------------------------------------------------------------------------------------------
\subsection{Singularities of kinematic maps}
%------------------------------------------------------------------------------------------------------------------------------------------------------------

In robotics, the term \emph{kinematic singularity} is used to denote the reduction in freedom of movement in 
the working space that arises in certain joint configurations. 
Consider for example the pointing mechanism in Figure \ref{fig: pointing} and imagine that it is steered so as to point
 toward some flying object.
If the object heads directly toward the north pole and from that point moves sidewise, then the mechanism will 
not be able to follow it in a continuous 
manner, because that would require an instantaneous rotation around the vertical axis. Similarly, if the object
flies very close to the axis through the north pole, then a continuous tracking is theoretically possible, but it may
 require infeasibly high rotational speeds. 
Both problems are caused by the fact that the poles are singular values of the forward kinematic map. More precisely,  
let us assume that
$\cs$ and $\ws$ are smooth manifolds, and $f\colon\cs\to\ws$ is a smooth map. Then $f$ induces the derivative map $f_*$ 
from the tangent bundle of $\cs$ to the tangent 
bundle of $\ws$. If $f_*$ is not onto at some point $c\in\cs$ (or equivalently, if the Jacobian of $f$ does not have maximal
 rank at $c$), then it is not possible 
to move 'infinitesimally' in certain directions from $f(c)$ while staying in a neighbourhood of $c$. 

This phenomenon is clearly visible
 in Figure \ref{fig: singularity}, which 
depicts the kinematic map of the pointing mechanism. 
\begin{figure}[ht]
    \centering
    \includegraphics[scale=0.5]{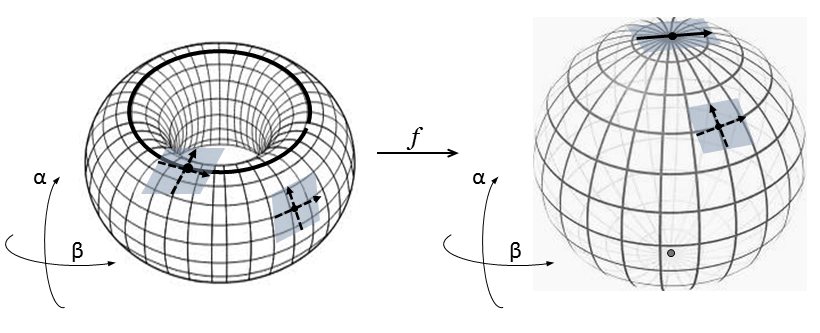}
    \caption{Singularities of the pointing mechanism.}
    \label{fig: singularity}
\end{figure}
For generic points the Jacobian of $f$ is a non-singular $2\times 2$-matrix, which means that the mechanism can move 
in any direction. However, for $\alpha=\pi/2$ the range of the Jacobian is 1-dimensional, therefore (infinitesimal) 
motion is possible only along one direction. While the explicit computation is somewhat tedious, there is a nice 
conceptual way to arrive at that conclusion. In fact, in this case the kinematic map happens to be the Gauss map
of the torus, and it is known that determinant of its Jacobian is precisely the Gauss curvature. Therefore, 
the singularities occur where the Gauss curvature is zero, i.e. along the top and the bottom parallels of the torus.

We are going to show that the above situation is not an exception. 
To this end let us examine singularities in spatial positioning  for
a serial chain consisting of revolute joints as in Figure \ref{fig: serial}. 
\begin{figure}[ht]
    \centering
    \includegraphics[scale=0.5]{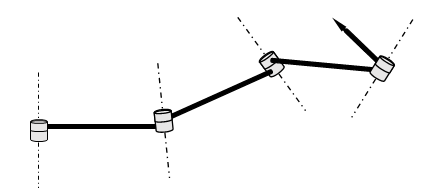}
    \caption{Serial chain with revolute joints.}
    \label{fig: serial}
\end{figure}
Several observations can be made. First, note that the joints can be always rotated so that all axes become parallel 
to some plane. Indeed, let us denote by $\vec x_i$ the vector product of the $i$-th and $(i+1)$-st joint axis 
in the chain, as in the Denavit-Hartenberg convention, and then apply the following procedure. The first two axes are 
parallel to a plane $P$, normal to $\vec x_1$, so we can rotate the 
second joint until $\vec x_2$ becomes aligned with $\vec x_1$. Then the first three axes are all parallel to $P$. 
Continue the rotations until all axes are parallel to $P$. 
Now observe that the system cannot move infinitesimally in the direction normal to the plane. 
In fact, the infinitesimal motion of the end-effector can be written as a vector sum
of infinitesimal motions of each joint. Clearly, if all axes are parallel to $P$, infinitesimal 
motion in the direction that is normal to $P$ is impossible. Conversely, if the vectors
$\vec x_i$ are not all aligned, then the infinitesimal moves span $\RR^3$, so the mechanism is not in singular
 position. Furthermore, whenever the serial chain is aligned 
in a singular configuration, we may rotate around the first and the last axis and clearly, still remain 
in a singular configuration. Therefore, the set of singular points of 
a serial chain is a union of two-dimensional tori. 

\begin{theorem}
A serial manipulator with revolute links is in singular position if, and only if, all axes are parallel to a plane.
Moreover, the set of singular points is 2-dimensional.
\end{theorem}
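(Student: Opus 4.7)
The plan is to reorganize and make precise the informal arguments given in the two paragraphs immediately preceding the theorem, which between them already contain the essential reasoning. I would begin by setting up notation: let $a_i \in \RR^3$ be the direction of the $i$-th revolute axis and $p_i$ a point on it, so that a unit angular velocity at joint $i$ produces, at the end-effector position $q$, the screw pair $(a_i,\ a_i \times (q-p_i))$. The Jacobian of the kinematic map $f$ has as columns these pairs for $i=1,\dots,n$, and singularity is the failure of these columns to span the tangent space of $SE(3)$.

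For the $(\Leftarrow)$ direction, I would assume all $a_i$'s lie in the direction plane of some $P\subset\RR^3$. Then the span of $\{a_1,\dots,a_n\}$ is contained in a 2-dimensional subspace, so the angular part of any infinitesimal motion of the end-effector lies in this subspace; in particular the end-effector cannot rotate about the direction normal to $P$, and hence the configuration is singular. For the $(\Rightarrow)$ direction I would appeal directly to the geometric analysis given above: the rotation procedure of the preceding paragraph shows that one may always conjugate a chain into a configuration in which all axes are parallel to the plane normal to $\vec{x}_1$, and if the vectors $\vec{x}_i=z_i\times z_{i+1}$ are not all parallel then the infinitesimal contributions span $\RR^3$, contradicting singularity.

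For the dimension claim, I would note that in any aligned configuration, rotating the last joint does not affect any axis $z_i$, while rotating the first joint by angle $\theta_1$ around $z_1$ carries the plane $P$ to a plane $P'$; since $z_1 \in P$ is fixed by the rotation, $P'$ still contains $z_1$ together with the rotated images of $z_2,\dots,z_n$, so coplanarity is preserved. This exhibits an $S^1 \times S^1$ of singular configurations through any singular point. The remaining angles $\theta_2,\dots,\theta_{n-1}$ are then constrained by $n-2$ coplanarity conditions, which generically cut out a zero-dimensional set, so the singular locus is a union of $2$-tori of dimension $2$.

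The step I expect to be the main obstacle is making the dimension count rigorous: the coplanarity conditions may fail to be transverse for degenerate Denavit--Hartenberg parameters (for instance when several consecutive axes are parallel), potentially producing higher-dimensional components of the singular set, so a complete argument would either impose a genericity assumption on the chain or treat these degeneracies by a separate case analysis.
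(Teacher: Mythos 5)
Your overall route is the same as the paper's: the paper gives no formal proof of this theorem, and the informal discussion immediately preceding the statement is its entire argument, so reorganizing and tightening that discussion is exactly the right move. Your $(\Leftarrow)$ direction, your observation that rotating the first and last joints preserves coplanarity of the axes (hence singularity), and your honest flagging of the genericity needed for the dimension count are all sound --- indeed the paper simply asserts that the singular set is a union of $2$-tori and is $2$-dimensional without excluding higher-dimensional degenerate components (e.g.\ a chain with all twist angles zero is singular on all of $(S^1)^n$), so on that last point you are more careful than the source.

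There is, however, a genuine mismatch between your setup and your argument in the $(\Rightarrow)$ direction. You define singularity as failure of the columns $\bigl(a_i,\ a_i\times(q-p_i)\bigr)$ to span the tangent space of $SE(3)$, but your proof of $(\Rightarrow)$ only shows that if the $\vec x_i$ are not all aligned then the \emph{angular} components $a_i$ span $\RR^3$. That does not contradict singularity of the $SE(3)$-valued (or of the $\RR^3$-valued positional) map: the six-dimensional Jacobian can drop rank for purely translational reasons even when the axes span $\RR^3$ --- for instance, when all axes pass through a common point the linear components vanish relative to that point and the rank is at most $3$, yet the axis directions need not be parallel to any plane; likewise a fully extended elbow arm is singular with non-coplanar axes. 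So for the full pose map the stated biconditional is false and your argument cannot close. The statement (and the paper's reasoning, despite the phrase ``spatial positioning'') is really about the orientation component $f\colon(S^1)^n\to SO(3)$, whose Jacobian has precisely the current axis directions $z_1,\dots,z_n$ as columns; there ``singular'' means exactly that the $z_i$ fail to span $\RR^3$, which is verbatim ``all axes parallel to a plane,'' and both implications become immediate. You should commit to that target space, dropping the translational part of your twists, after which your proof is correct; you should also handle the degenerate case $\vec x_i=0$ (consecutive parallel axes) when passing from ``all $\vec x_i$ aligned'' to ``all axes coplanar in direction.''
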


The 'if' part of the above theorem was known to Hollerbach \cite{Hollerbach}, and the general formulation is due to Gottlieb \cite{Gottlieb}). 
As a consequence, if the joint space is 3-dimensional, or more generally, if the robot arm is modular, 
so that three joints are used for positioning and the remaining joints take care of the orientation of the arm,  
then the singular space forms a separating subset (at least locally), which means
that in general we cannot avoid singularities while moving the robot arm between different positions 
in the working space. This problem can be avoided 
if the manipulator has more than three joints dedicated to the positioning of the arm. Even better, in redundant 
systems one can have that singular values always
contain regular points in their preimages, which at least in principle opens a possibility for regular motion planning. 

It is reasonable to ask whether one can completely eliminate singularities by constructing a robot arm with sufficiently
many revolute joints. Unfortunately, the answer is again negative, as Gottlieb \cite{Gottlieb} showed that every 
map from a torus to standard working spaces must have singularities.

\begin{theorem}
\label{thm:singularities}
Every smooth map $f\colon (S^1)^n\to \ws$ where $\ws=S^2, SO(3)$ or $SE(3)$ has singular points.
\end{theorem}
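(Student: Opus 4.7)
The plan is to argue by contradiction, showing that no submersion $f\colon (S^1)^n \to \ws$ exists.

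If $n<\dim\ws$, then $df_x$ cannot be surjective anywhere, so every point of $(S^1)^n$ is singular; hence assume $n\geq\dim\ws$ and that $f$ is a submersion. Since $T^n := (S^1)^n$ is compact, $f$ is proper, so Ehresmann's theorem implies that $f$ is a locally trivial smooth fiber bundle. Its image is open (submersions are open maps) and compact, hence clopen, so by connectedness $f$ is surjective with closed fiber $F$ of dimension $n-\dim\ws$. The case $\ws=SE(3)\cong\RR^3\times SO(3)$ is now immediate: $\ws$ is noncompact, contradicting its being the image of the compact torus.

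For $\ws=S^2$ or $SO(3)$, I would work over $\mathbb{Q}$, noting that $H^*(SO(3);\mathbb{Q})\cong H^*(S^3;\mathbb{Q})$, so in both cases the base has rational cohomology concentrated in degrees $0$ and $m:=\dim\ws\in\{2,3\}$. The asphericity of $T^n$ together with the homotopy long exact sequence of the fibration $F\hookrightarrow T^n\to\ws$ forces $\pi_1(F)\twoheadrightarrow\pi_1(T^n)=\ZZ^n$, with (for $\ws=S^2$) $\pi_2(S^2)=\ZZ$ injecting centrally into $\pi_1(F)$, and (for $\ws=SO(3)$) $\pi_3(SO(3))=\ZZ$ giving $\pi_2(F)\cong\ZZ$. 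The Serre spectral sequence then collapses to a Wang-type exact sequence with a single transgression $\theta\colon H^*(F;\mathbb{Q})\to H^{*-m+1}(F;\mathbb{Q})$ that is a graded derivation, yielding the numerical identity
$$\binom{n}{k}=c_k+c_{k-m}-\rho_k-\rho_{k-1},$$
where $c_k=\dim H^k(F;\mathbb{Q})$ and $\rho_k=\mathrm{rank}\,\theta|_{H^k(F)}$. The equation at $k=n$ forces $F$ to be orientable (else $c_{n-m}=0$ and $\rho_{n-1}=-1$, impossible), so Poincar\'e duality gives $c_k=c_{n-m-k}$. Combining these with the rank bound $\rho_{m-1}\leq\dim H^0(F)=1$ and the growth of $\binom{n}{k}$ makes the identities at $k\approx m$ and $k\approx n-m$ incompatible, producing a negative $\rho_\ell$ and the desired contradiction.

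The hard part will be establishing the numerical contradiction uniformly in $n$: for small $n$ the elementary observations above (wrong $\pi_1$ of $F$, wrong Betti numbers) already suffice, but for large $n$ one must additionally use the derivation property of $\theta$ to propagate the bound $\rho_{m-1}\leq 1$ to bounds on higher $\rho_k$'s via the action of $\theta$ on decomposable cup products in $H^*(F;\mathbb{Q})$.
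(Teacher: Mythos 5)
Your opening moves coincide with the paper's: assume $f$ is a submersion and invoke Ehresmann (properness coming from compactness of the torus) to get a fibre bundle $F\hookrightarrow (S^1)^n\to\ws$. Your disposal of the case $\ws=SE(3)$ by noncompactness of the base is correct and even more elementary than what the paper does. The problem is that for the two essential cases $\ws=S^2$ and $\ws=SO(3)$ you do not actually produce a contradiction: you set up a Wang-type exact sequence and write the identity $\binom{n}{k}=c_k+c_{k-m}-\rho_k-\rho_{k-1}$ (which is correct for $S^2$), but the claimed incompatibility "at $k\approx m$ and $k\approx n-m$" is asserted, not derived, and you yourself flag that establishing it uniformly in $n$ is "the hard part" and would require controlling higher $\rho_k$'s via the derivation property of $\theta$. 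That is precisely the content of the theorem, so as it stands the proposal has a genuine gap. There is also an unaddressed issue specific to $SO(3)$: since $\pi_1(SO(3))=\ZZ_2$ may act nontrivially on $H^*(F;\mathbb{Q})$, the two-column $E_2$-page involves the invariant subspace $H^*(F;\mathbb{Q})^{\ZZ_2}$ rather than $H^*(F;\mathbb{Q})$ itself, so the Betti numbers $c_k$ in your identity need not satisfy Poincar\'e duality and $F$ need not even be connected ($\pi_0(F)$ is a quotient of $\ZZ_2$); both points undermine the steps where you invoke $c_k=c_{n-m-k}$ and $\rho_{m-1}\le\dim H^0(F)=1$.

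The paper avoids all of this with one further observation that you may want to adopt: compose $f$ with the universal covering $p\colon\RR^n\to(S^1)^n$. Then $f\circ p\colon\RR^n\to\ws$ is a fibration with contractible total space, so its fibre is homotopy equivalent to $\Omega\ws$; on the other hand that fibre is $p^{-1}(f^{-1}(w))$, a covering space of the closed $(n-\dim\ws)$-manifold $F$, hence a finite-dimensional manifold with vanishing homology above degree $n-\dim\ws$. Since $\Omega S^2$, $\Omega SO(3)$ and $\Omega SE(3)$ all have nontrivial homology in infinitely many degrees, this is a contradiction, and it treats all three targets (including $SE(3)$) uniformly with no spectral sequence bookkeeping. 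If you prefer to keep your route, you must either carry out the numerical argument in full generality (it does close for small $n$, e.g. $n=3,4$ over $S^2$, but the general case is real work) or replace it by the loop-space argument above.
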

\begin{proof}
Assume that the Jacobian of $f$ is everywhere surjective.  
Then $f$ is a submersion, and therefore, by 
a classical theorem of Ehresmann \cite{Ehresmann}, it is a fibre bundle. It follows that the composition of $f$ 
with the universal covering map $p\colon \RR^n\to (S^1)^n$
yields a fibre bundle $f\circ p\colon \RR^n\to\ws$, whose fibre is a submanifold of $\RR^n$. 
But $\RR^n$ is contractible, so the fibre of $f\circ p$ is homotopy
equivalent to the loop space $\Omega\ws$. This contradicts the finite-dimensionality of the fibre, 
because the loop spaces $\Omega S^2$, $\Omega SO(3)$ and $\Omega SE(3)$ are known to have infinitely 
many non-trivial homology groups.
\end{proof}

Note that under the general assumptions of the theorem we do not have any information about the dimension of 
the singular set. 

%------------------------------------------------------------------------------------------------------------------------------------------------------------
\subsection{Redundant manipulators}
%------------------------------------------------------------------------------------------------------------------------------------------------------------

Although Theorem \ref{thm:singularities} implies that even redundant mechanisms have singularities, 
the extra room in the configuration spaces of redundant manipulators 
allows construction and extension of local inverse kinematic maps, so one can hope to achieve non-singular 
manipulation planning over large portions of the working space. In fact, there is a great deal of ongoing research 
that tries to exploit additional degrees of freedom in redundant manipulators. See for example the survey 
\cite{Tutorial}, or a more recent book chapter \cite{Kinematically Redundant Manipulators}
for possible approaches to inverse kinematic maps and manipulation, especially in  terms of differential equations 
and variational problems. In this subsection we will focus on some qualitative questions. To fix the ideas, let us 
assume that the mechanism has only revolute joints, so $\cs=(S^1)^n$, and  
consider either the pointing or orientation mechanisms, i.e. $\ws=S^2$ or $\ws=SO(3)$. Moreover, assume that 
$\dim \cs> \dim\ws$  so that the mechanism is redundant. 

A standard approach to robot manipulation is the following: given a robot's initial joint configuration 
$c\in\cs$ and end-effector's required final position $w\in\ws$, one first computes the initial position $f(c)\in \ws$, 
and finds a motion from $f(c)$ to $w$ represented by a path $\alpha\colon [0,1]\to\ws$  from $\alpha(0)=f(c)$ 
to $\alpha(1)=w$. Then the path $\alpha$ is lifted to $\cs$ starting from the initial point $c$, thus obtaining 
a path $\widetilde\alpha\colon[0,1]\to\cs$.  The lifting $\widetilde\alpha$ represents the motion of joints 
that steers the robot to the required position $w$, which is is reminiscent 
of the path-lifting problem in covering spaces or fibrations.  However, we know that the kinematic map is not 
a fibre bundle in general, and so the path lifting must avoid singular points. From the computational viewpoint the most
natural approach to path lifting is by solving differential equations but there are also other approaches.
We will follow \cite{Baker} and call any such lifting method a \emph{tracking algorithm}.

Clearly, every (smooth) inverse kinematic map determines a tracking algorithm but the converse is not true in general.
In fact, a tracking algorithm determined by inverse kinematics is always \emph{cyclic} in the sense that it lifts 
closed paths in $\ws$ to closed paths in $\ws$. Baker and Wampler \cite{Baker-Wampler} proved that 
a tracking algorithm is equivalent to one determined by inverse functions if, and only if the tracking is cyclic. 
Therefore Theorem \ref{sections} implies that, notwithstanding the available redundant degrees of freedom, one cannot
construct a cyclic tracking algorithm for pointing or orienting. This is not very surprising if we know that 
most tracking algorithms  rely on solutions of differential equations and are thus of a local nature. 
In particular the most widely used 
Jacobian method with additional constraints (cf. \emph{extended Jacobian method} in \cite{Tutorial} or
\emph{augmented Jacobian method} in \cite{Kinematically Redundant Manipulators}) yields tracking that is 
only \emph{locally cyclic}, in the sense that there is an open cover of $\ws$, such that  closed paths contained in 
elements of the cover are tracked by closed paths in $\cs$. 
However, this does not really help, because Baker and Wampler \cite{Baker-Wampler} (see also \cite[Theorem 2.3]{Baker})
proved that if there is a tracking algorithm defined on an entire  $\ws=S^2$ or $\ws=SO(3)$, then there 
are arbitrarily short 
closed paths in the working space that are tracked by open paths in $\cs$. Therefore 

\begin{theorem}
The extended Jacobian method (or any other locally cyclic method) cannot be used to construct a tracking
algorithm for pointing or orienting a mechanism with revolute joints.
\end{theorem}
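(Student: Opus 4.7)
The plan is to argue by contradiction, combining the assumed local cyclicity with the Baker--Wampler fact that is cited immediately before the statement. Suppose, toward a contradiction, that a tracking algorithm $T$ is defined on the entire working space $\ws$ (with $\ws=S^2$ or $SO(3)$ and $\cs=(S^1)^n$) and is locally cyclic. By definition this is witnessed by some open cover $\{U_\alpha\}_{\alpha\in A}$ of $\ws$ such that every closed path whose image lies inside a single $U_\alpha$ is lifted by $T$ to a closed path in $\cs$.

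Next I would invoke compactness of the working space. Both $S^2$ and $SO(3)$ are compact metric manifolds (under any natural bi-invariant metric in the second case), so the open cover $\{U_\alpha\}$ admits a Lebesgue number $\delta>0$: any subset of $\ws$ of diameter less than $\delta$ is contained in some $U_\alpha$. In particular, every closed path of diameter less than $\delta$ must be lifted to a closed path in $\cs$ by the algorithm $T$.

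This directly contradicts the Baker--Wampler result quoted above from \cite{Baker-Wampler} (and \cite[Theorem 2.3]{Baker}): since $T$ is defined on the whole of $\ws=S^2$ or $SO(3)$, there exist closed paths in $\ws$ of arbitrarily small diameter whose $T$-lifts fail to close. Choosing such a path of diameter less than $\delta$, local cyclicity forces the lift to be closed while Baker--Wampler forbids it, giving the desired contradiction.

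The only delicate point is interpreting the phrase \emph{arbitrarily short closed paths} in the Baker--Wampler statement in the sense of spatial diameter (or length, which majorises diameter) rather than merely parameter-interval length; once this standard reading is confirmed, the Lebesgue number argument applied to the local-cyclicity cover does the rest, so I expect no further obstacles.
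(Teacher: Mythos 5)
Your argument is correct and is essentially the paper's own: the theorem is stated as an immediate consequence ("Therefore") of the quoted Baker--Wampler result, the intended reasoning being exactly that local cyclicity forces sufficiently short closed paths (those fitting inside a cover element) to lift closed, contradicting the existence of arbitrarily short closed paths with non-closed lifts. Your explicit use of compactness and the Lebesgue number lemma simply fills in the step the paper leaves implicit, so there is nothing to add.
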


Let us also mention that an analogous result can be proved for positioning mechanisms where the working space is a 2- 
or 3-dimensional disk around the base of the mechanism and whose radius is sufficiently big. See 
\cite[Theorem 2.4]{Baker} and the subsequent Corollary for details.

Our final result is again due to Gottlieb \cite{Gottlieb:RFB} and is related to the question of whether it is possible to restrict the
angles of the joints to stay away from the singular set $\mathrm{Sing}(f)$ of the Denavit-Hartenberg 
kinematic map $f\colon (S^1)^n\to SO(3)$. We obtain the following surprising restriction.

\begin{theorem}
\label{thm:closed}
Let $M$ be a closed smooth manifold. Then there does not exist a smooth map 
$s\colon M\to (\cs- \mathrm{Sing}(f))$ such that the map $f\circ s\colon M\to SO(3)$ is a submersion (i.e. non-singular).
\end{theorem}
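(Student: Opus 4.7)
The strategy is to assume that such an $s$ exists and derive a contradiction with Theorem~\ref{sections}.

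\emph{Step 1 (Ehresmann).} Since $M$ is closed and $g := f\circ s\colon M\to SO(3)$ is a submersion, $g$ is automatically proper. Ehresmann's theorem then makes $g$ a locally trivial smooth fibre bundle with compact fibre $F$. The image $g(M)$ is open (being the image of a submersion) and compact, hence clopen in the connected base $SO(3)$, so $g$ is surjective.

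\emph{Step 2 (Lifting through universal covers).} Let $p\colon \RR^n\to\cs$ and $q\colon S^3\to SO(3)$ denote the universal covers. Because $\RR^n$ is simply connected, $f\circ p$ admits a smooth lift $\hat f\colon \RR^n\to S^3$ with $q\circ\hat f=f\circ p$. Define $\widetilde M$ as the pullback of $p$ along $s$; this is a smooth covering of $M$ with fibre $\ZZ^n$, equipped with a tautological projection $\tilde s\colon \widetilde M\to\RR^n$ satisfying $p\circ\tilde s=s\circ\pi$, where $\pi\colon\widetilde M\to M$ is the covering projection. In particular the composite $\hat f\circ\tilde s\colon\widetilde M\to S^3$ factors through the contractible space $\RR^n$, and $q\circ(\hat f\circ\tilde s)=g\circ\pi$.

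\emph{Step 3 (Extracting a global section).} I plan to combine the factorisation $f\circ s=g$, the local triviality of the fibre bundle $g$, and the contractibility of $\RR^n$ to build a continuous global section of $f$. Namely, over an open set $V\subset SO(3)$ on which $g$ trivialises, any choice of local section $\sigma_V\colon V\to M$ of $g$ yields a local section $s\circ\sigma_V\colon V\to\cs$ of $f$. The ambient lift $\hat f\circ\tilde s$, factored through $\RR^n$, constrains how these local sections over different charts differ on overlaps: the discrepancies descend to elements of the deck group $\ZZ^n$, which must collectively represent the class detected by $\pi_1(SO(3))=\ZZ_2$. Using this rigidity I plan to patch the local sections coherently into a continuous global section of $f\colon\cs\to SO(3)$, contradicting Theorem~\ref{sections}.

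\emph{Main obstacle.} The principal technical difficulty is the patching step: the deck group of $\widetilde M\to M$ is the torsion-free lattice $\ZZ^n$ and it may not be possible to lift $\tilde s$ globally, so one needs to pass to an intermediate cover on which the lifting obstruction $s_\sharp\colon\pi_1(M)\to\ZZ^n$ vanishes, at the cost of losing compactness. Controlling the interplay between the two universal covers---$\RR^n\to\cs$ and $S^3\to SO(3)$---and tracking how the torsion obstruction ($\ZZ_2$ cannot factor through $\ZZ^n$) propagates through this diagram is where the real work lies; this same torsion obstruction is what ultimately underlies Theorem~\ref{sections}.
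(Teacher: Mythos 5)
There is a genuine gap, and it sits exactly where you place the ``real work'': Step 3 cannot be carried out, because the contradiction you are aiming for is not actually forced by the hypotheses. A proper submersion $g=f\circ s\colon M\to SO(3)$ is indeed a fibre bundle by Ehresmann (your Step 1 is correct and is also the paper's first move), but a fibre bundle over $SO(3)$ in general admits \emph{no} global section, and nothing in your setup obliges the local sections $s\circ\sigma_V$ to patch into one. The ``rigidity'' you invoke from the lift $\hat f\circ\tilde s$ does not exist: on an overlap $V\cap V'$ two local sections of $g$ can land in entirely different parts of the fibre of $g$, and their images under $s$ need not differ by deck transformations of $\RR^n\to\cs$ in any controlled way. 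So reducing to Theorem~\ref{sections} is the wrong target; the existence of a fibre bundle $M\to SO(3)$ with closed total space is not by itself contradictory (consider $SO(3)\times F\to SO(3)$), and no amount of covering-space bookkeeping will manufacture a section of $f$ out of it.

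The missing ingredient is a statement about the \emph{homotopy class} of $f$ itself, which is what the paper's Lemma~\ref{lem:DH} supplies: the Denavit--Hartenberg map factors up to homotopy as $f\simeq g\circ m$ through $S^1$. The paper then composes with the submersion $p\colon SO(3)\to S^2$ given by the last column; since $\pi_1(S^2)=0$, the factorization through $S^1$ forces $p\circ f\circ s$ to be null-homotopic. On the other hand $p\circ f\circ s\colon M\to S^2$ is a proper submersion, hence by Ehresmann a fibre bundle whose fibre is a closed (finite-dimensional) manifold. But the fibre of a fibration is homotopy equivalent to its homotopy fibre, and the homotopy fibre of a null-homotopic map $M\to S^2$ is $M\times\Omega S^2$, which has nonzero homology in infinitely many degrees --- contradiction. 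Your Step 2 gestures at something adjacent (contractibility of $\RR^n$), but lifting to the universal cover $\widetilde M$ destroys the compactness that Ehresmann and the finite-dimensionality argument both need, as you yourself note. If you want to salvage the proposal, replace Steps 2--3 by: (i) prove the factorization of $f$ through $S^1$ up to homotopy, (ii) compose with $SO(3)\to S^2$ to get a null-homotopic proper submersion, and (iii) compare the actual fibre with the homotopy fibre $M\times\Omega S^2$.
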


The proof is based on a simple lemma which is also of independent interest.

\begin{lemma}
\label{lem:DH}
The Denavit-Hartenberg map $f\colon (S^1)^n\to SO(3)$ can be factored up to a homotopy as
$$\xymatrix{
(S^1)^n \ar[rr]^f \ar[dr]_m& &  SO(3)\\
& S^1 \ar[ur]_g}$$
where $m$ is the $n$-fold multiplication map in $S^1$ and $g$ is the generator of $\pi_1(SO(3))$.
\end{lemma}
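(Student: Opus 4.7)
The plan is to write down an explicit homotopy that deforms $f$ to $g\circ m$ by continuously shrinking the constant Denavit-Hartenberg parameters $\alpha_i$ to zero. Inspecting the $4\times 4$ matrix displayed earlier in the paper, the rotational part of each Denavit-Hartenberg factor decomposes as a product $R_z(\theta_i)\,R_x(\alpha_i)$, where $R_z(\theta)$ denotes rotation by $\theta$ about the $z$-axis and $R_x(\alpha)$ denotes rotation by $\alpha$ about the $x$-axis. Since the projection $SE(3)\to SO(3)$ that keeps only the rotational part is a group homomorphism, applying it to the product of $n$ DH matrices gives
\begin{equation*}
f(\theta_1,\ldots,\theta_n)=R_z(\theta_1)R_x(\alpha_1)R_z(\theta_2)R_x(\alpha_2)\cdots R_z(\theta_n)R_x(\alpha_n).
\end{equation*}

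The first step is to introduce the family $H\colon (S^1)^n\times[0,1]\to SO(3)$ defined by
\begin{equation*}
H(\theta_1,\ldots,\theta_n,t)=R_z(\theta_1)R_x(t\alpha_1)\cdots R_z(\theta_n)R_x(t\alpha_n).
\end{equation*}
This is evidently continuous and satisfies $H(\cdot,1)=f$. Setting $t=0$ collapses each $R_x(t\alpha_i)$ to the identity, and since $R_z$ is a one-parameter subgroup one obtains
\begin{equation*}
H(\theta_1,\ldots,\theta_n,0)=R_z(\theta_1)\cdots R_z(\theta_n)=R_z(\theta_1+\cdots+\theta_n).
\end{equation*}
This writes $H(\cdot,0)$ as $g\circ m$, where $m\colon (S^1)^n\to S^1$, $m(\theta_1,\ldots,\theta_n)=\theta_1+\cdots+\theta_n$, is the $n$-fold multiplication, and $g\colon S^1\to SO(3)$ is $g(\theta)=R_z(\theta)$.

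The second step is to verify that $g$ represents the generator of $\pi_1(SO(3))=\ZZ_2$. This is immediate since the loop $\theta\mapsto R_z(\theta)$, $\theta\in[0,2\pi]$, lifts through the universal cover $S^3\to SO(3)$ to a path from the identity quaternion to its antipode, and such a path is nontrivial in $\pi_1(SO(3))$. With these two ingredients, $H$ provides the required homotopy $f\simeq g\circ m$. The reason there is essentially no obstacle is that the DH parameters $\alpha_i$ are constants of the mechanism, not coordinates on the domain, so rescaling them by a parameter $t$ produces a genuine continuous homotopy of maps $(S^1)^n\to SO(3)$ without any obstruction-theoretic input.
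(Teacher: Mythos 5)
Your proof is correct and takes essentially the same route as the paper: scaling the twist parameters $\alpha_i$ down to zero is precisely the paper's deformation of the arm until all joint $z$-axes are parallel, after which the end-effector orientation is the angle sum $R_z(\theta_1+\cdots+\theta_n)=g(m(\theta_1,\ldots,\theta_n))$. The only (harmless) difference is that you verify that $g=R_z$ generates $\pi_1(SO(3))$ directly by lifting to the universal cover, whereas the paper deduces this from the fact that $f$ induces an epimorphism on fundamental groups.
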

\begin{proof}
First observe that a rotation of the $z$-axis in the Denavit-Hartenberg frame of any joint induces a homotopy between 
the resulting  forward kinematic maps. Therefore, we may deform the robot arm until all $z$-axes are parallel 
(so that the arm is effectively planar). Then the rotation angle of the end-effector is simply the sum of 
the rotations around each axis. As the sum of angles correspond to the multiplication in $S^1$, it follows that 
the Denavit-Hartenberg map factors as $f=g\circ m$ for some map $g\colon S^1\to SO(3)$. Clearly, $g$ must generate
$\pi_1(SO(3))=\ZZ_2$ because $f$ induces an epimorphism of fundamental groups.
\end{proof}

\begin{proof}(of Theorem \ref{thm:closed})
Assume that there exists $s\colon M\to (\cs- \mathrm{Sing}(f))$ such that $f\circ s\colon M\to SO(3)$ 
is a submersion. It is well-known that the projection of an orthogonal $3\times 3$-matrix to its last column determines a
map $p\colon SO(3)\to S^2$ that is also a submersion. 
Then the composition $$\xymatrix{M \ar[r]^-s &  \cs- \mathrm{Sing}(f)\ar[r]^-f & SO(3) \ar[r]^-p & S^2}$$
is a submersion, and hence a fibre bundle by Ehresmann's theorem \cite{Ehresmann}. It follows that
the fibre of $p\circ f\circ s$ is a closed submanifold of $M$. On the other side, $p\circ f\circ s$
is homotopic to the constant, because by Lemma \ref{lem:DH} $f$ factors through $S^1$ and $S^2$ is simply-connected. 
This leads to a contradiction, as the fibre of the constant map $M\to S^2$ is homotopy equivalent to 
$M\times \Omega S^2$, which cannot be homotopy equivalent to a closed manifold, because $\Omega S^2$ has 
infinite-dimensional homology.
\end{proof}

In particular, Theorem \ref{thm:closed} implies that even if we add constraints that restrict 
the configuration space  of the robotic device to some closed submanifold $\cs'$ of the set of non-singular 
configurations of the joints, the restriction  $f\colon \cs'\to SO(3)$ of the Denavit-Hartenberg kinematic map
still has singular points. Clearly, the new singularities  are not caused by the configuration of joints but 
are a consequence of the constraints that define $\cs'$.

%============================================================================================================================================================
\section{Overview of topological complexity}
%============================================================================================================================================================
\label{sec:TC overview}

The concept of topological complexity was introduced by M. Farber in \cite{Farber:TCMP} as a qualitative measure of 
the difficulty in constructing a robust motion plan for a robotic device. Roughly speaking, motion planning 
problem for some mechanical device requires to find a rule ('motion plan') that yields a continuous trajectory 
from any given initial position to a desired final position of the device. A motion plan is robust if small variations
in the input data results in small variations of the connecting trajectory.

Toward a mathematical formulation of the motion planning problem one
considers the space $\cs$ of 
all positions ('configurations') of the device, and the space $\pcs$ of all continuous paths $\alpha\colon I\to\cs$. Let 
$\pi\colon \pcs\to\cs\times \cs$ be the evaluation map given by 
$\pi(\alpha)=(\alpha(0),\alpha(1)).$
A \emph{motion plan} is a rule that to each pair of points $c,c'\in\cs$ assigns a path $\alpha(c,c')\in\pcs$ 
such that $\pi(\alpha(c,c'))=(c,c')$. 
For practical reason we usually require \emph{robust} motion plans, i.e. plans that are continuously dependent on $c$ 
and $c'$. Clearly, 
robust motion plans are precisely the continuous sections of $\pi$. Farber observed that a continuous global section of
$\pi$  exists if, and only if, $\cs$ is contractible. For non-contractible spaces one may consider partial continuous
sections, so he  defined 
the \emph{topological complexity} $\TC(\cs)$ as the minimal $n$ for which $\cs\times\cs$ can be covered by $n$ open sets
each admitting a continuous section to $\pi$. Note that other authors prefer the 'normalized' topological 
complexity (by one smaller that our $\TC$) as it sometimes leads to simpler formulas. 

We list some basic properties of the topological complexity:
\begin{enumerate}
\item It is a homotopy invariant, i.e. $\cs\simeq\cs'$ implies $\TC(\cs)=\TC(\cs')$.
\item There is a fundamental estimate 
$$\cat(\cs)\le\TC(\cs)\le\cat(\cs\times\cs),$$ 
where $\cat(\cs)$ denotes the (Lusternik-Schnirelmann) category of $\cs$ (see \cite{CLOT}).
\item Furthermore 
$$\TC(\cs)\ge \nil\big(\Ker\,\Delta^*\colon H^*(\cs\times\cs)\to H^*(\cs)\big).$$ 
Here $\Ker\,\Delta^*$ is the kernel of the homomorphism between
cohomology rings induced by the diagonal map $\Delta\colon\cs\to\cs\times\cs$, and $\nil(\Ker\,\Delta^*)$ is the minimal $n$
 such that every product of $n$ elements in
$\Ker\Delta^*$ is zero.
\end{enumerate}
There are many other results and explicit computations of topological complexity -- see \cite{Farber:ITR} for a fairly 
complete survey of the general theory.

%============================================================================================================================================================
\section{Complexity of a map}
%============================================================================================================================================================
\label{sec:Complexity of a map}

In \cite{Pav:CFKM} we extended Farber's approach to study the more general problem of robot manipulation. Robots are usually 
 manipulated by operating their joints in a way to achieve 
a desired pose of the robot or a part of it (usually called \emph{end-effector}), so we must take into account
the kinematic map which relates the internal 
joints states with the position and orientation of the end-effector. 
To model this situation, we take a map $f\colon \cs\to\ws$ and consider the
 projection map $\pi_f\colon \pcs\to \cs\times \ws$,
defined as 
$$\pi_f(\alpha):=(1\times f)(\pi(\alpha))= \big(\alpha(0),f(\alpha(1))\big).$$
Similarly to motion plans, a manipulation plan corresponds to a continuous sections of $\pi_f$, so it would be natural 
to define the topological complexity $\TC(f)$ as the  minimal $n$ such that $\cs\times\ws$ can be covered by $n$ sets,
each admitting a continuous section to $\pi_f$. This is analogous to the definition of $\TC(\cs)$, but there are two 
important issues that we must discuss before giving a precise description of $\TC(f)$.

In the definition of $\TC(\cs)$ Farber considers continuous sections whose domains are open subsets of $\cs\times\cs$. 
In most applications $\cs$ is a 
nice space (e.g. manifold, semi-algebraic set,...), and in fact Farber \cite{Farber:ITR} shows  that for such spaces 
alternative definitions of topological complexity based on closed, locally compact or ENR domains yield the same result. 
This was further generalized by Srinivasan \cite{Srinivasan} 
who proved that if $\cs$ is a metric ANR space, then every section over an arbitrary subset $Q\subset \cs$ can be extended 
to some open neighbourhood of $Q$. 
Therefore, for a very general class of spaces (including metric ANRs) one can define topological complexity 
by counting sections of the evaluation map $\pi\colon \pcs\to \cs\times \cs$ over arbitrary subsets of the base. 

Another important fact is that the map $\pi$ is a fibration, which
implies that  one can replace sections by homotopy sections in the definition of $\TC(\cs)$ and still get the same result.  
This relates topological complexity with the so called Schwarz genus \cite{Schwarz}, a well-established and extensively
studied concept in homotopy theory. The \emph{genus} $\g(h)$ of a map $h\colon X\to Y$ 
is the minimal $n$ such that $Y$ can be covered by $n$ open subsets, each admitting a continuous \emph{homotopy section} 
to $h$; the genus is infinite if  there is no such $n$. Therefore, we have 
$\TC(\cs)=\g(\pi)$, a result that puts topological complexity squarely within the realm of homotopy theory.

The situation is less favourable when it comes to the complexity of a map. Firstly, 
$\pi_f\colon\pcs\to\cs\times\ws$ is a fibration if, and only if, $f\colon\cs\to\ws$ is a fibration, and that is 
an assumption that we do not wish to make in view of our intended applications (cf. Theorem \ref{thm:singularities}). 
Every section is a homotopy section but not vice-versa, and in fact, the minimal number of homotopy sections for a given map 
can be strictly smaller than the number of sections. For example, the map $h\colon [0,3]\to [0,2]$ given by 
$$h(t):=\left\{\begin{array}{ll}
t & t\in [0,1]\\
1 & t\in [1,2]\\
t-1 & t\in [2,3]
\end{array}\right.$$
(see Figure \ref{fig: genus 1 sec 2})
\begin{figure}[ht]
    \centering
    \includegraphics[scale= 0.6]{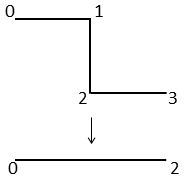}
    \caption{Projection with genus 1 and sectional number 2.}
    \label{fig: genus 1 sec 2}
\end{figure} 
admits a global homotopy section because its codomain is contractible, but clearly there does not exist 
a global section to $h$. Furthermore, the following example (which can be easily generalized) shows that the 
difference between the minimal number of sections and the minimal number of homotopy sections can be arbitrarily large.
\begin{figure}[ht]
    \centering
    \includegraphics[scale=0.6]{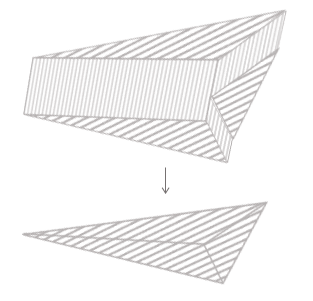}
    \caption{Projection with genus 1 and sectional number 3.}
    \label{fig: genus 1 sec 3}
\end{figure} 
Actually, many results on topological complexity depend heavily on the fact that the evaluation map 
$\pi\colon \pcs\to \cs\times \cs$ is a fibration, and so some direct generalizations of the results about $\TC(\cs)$ 
are harder to prove  while other are simply false.

The second difficulty is related to the type of subsets of $\cs\times\ws$ that are domains of sections to $\pi_f$.
While the spaces $\cs$ or $\ws$ are usually assumed to be nice (e.g. manifolds), the map $f$ can have singularities
which leads to the problem that we explain next. 

Given a subset $Q$ of $\cs\times\ws$ and a point $c\in\cs$ let $Q|_c$ be the subset of $\ws$ defined as 
$Q|_c:=\{w\in\ws\mid (c,w)\in Q\}$. Assume that $Q|_c$ is non-empty and 
that there is a partial section $\alpha\colon Q\to\pcs$ to $\pi_f$. Then the map 
$$\alpha_c\colon Q|_c\to \cs\ \ \ \ \ \alpha_c(w):=\alpha(c,w)(1)\; .$$
satisfies $f(\alpha_c(w))=w$, so $\alpha_c$ is a partial section to $f$. Furthermore 
$H\colon Q|_c\times I\to\cs$, given by $H(w,t):=\alpha(c,w)(1-t)$ deforms the image of the section 
$\alpha_c(Q|_c)$ in $\cs$ to the point $c$, while $f\circ H$ deforms $Q|_c$ in $\ws$ to the point $f(c)$. 
These observations have several important consequences.

Assume that $(c,w)$ is an interior point of some domain $Q\subset\cs\times\ws$ of a partial section to $\pi_f$. Then $w$ is 
an interior point of $Q|_c$ that admits a partial section to $f$. 
Therefore, if $f$ is not locally sectionable around $w$ (like  the previously considered map $h\colon [0,3]\to [0,2]$ around the point 1), then it is impossible to find an open cover of $\cs\times\ws$ that admits
partial sections to $\pi_f$. A similar argument shows that we cannot use closed domains for a reasonable definition
of the complexity of $f$. One way out would be to follow the approach by Srinivasan \cite{Srinivasan} and 
consider sections with arbitrary subsets as domains, but that causes problems elsewhere. After some balancing
we believe that the following choice is best suited for applications. 

Let $\cs$ and $\ws$ be path-connected spaces, and let $f\colon\cs\to\ws$ be a surjective map. 
The \emph{topological complexity} $\TC(f)$ of  $f$ is defined as the minimal $n$ for which there exists a filtration of 
$\cs\times\ws$ by closed sets
$$\emptyset=Q_0\subseteq Q_1\subseteq\ldots\subseteq Q_n=\cs\times\ws,$$
such that $\pi_f$ admits partial sections over $Q_i-Q_{i-1}$ for $i=1,2,\ldots,n$. By taking complements we obtain 
an equivalent definition based on filtrations of $\cs\times\ws$ by open sets. If $\ws$ is a metric ANR, then 
$\g(\pi_f)\le\TC(f)$, and the two coincide if $f$ is a fibration.

Suppose $\TC(f)=1$, i.e. there exists a section $\alpha\colon\cs\times\ws\to\pcs$ to $\pi_f$. Then $(\cs\times\ws)|_c=\ws$ for
 every $c\in\cs$, and by the above considerations, $f\colon\cs\to\ws$ admits 
a global section that embeds $\ws$ as a categorical subset of $\cs$. Even more, $\ws$ can be deformed to a point within
 $\ws$, so $\ws$ is contractible and $\TC(\ws)=\cat(\ws)=1$. 

To get a more general statement, let us say that a partial section $s\colon Q\to \cs$ to $f\colon \cs\to \ws$ is
 \emph{categorical} if $s(Q)$ can be deformed to a point within $\cs$. 
Then we define $\csec(f)$ to be the minimal $n$ so that there is a filtration
$$\emptyset=A_0\subseteq A_1\subseteq\ldots\subseteq A_n=\ws,$$
by closed subsets, such that $f$ admits a categorical section over $A_i-A_{i-1}$ for $i=1,\ldots,n$ 
(and $\csec(f)=\infty$, if no such $n$ exists).
For $\ws$ a metric ANR space we have $\csec(f)\ge\cat(\ws)$ because $A=f(s(A))$ is contractible in $\ws$ for every categorical
 section $s\colon A\to \cs$. If furthermore $f\colon \cs\to\ws$ is a fibration, then $\csec(f)=\cat(\ws)$. 

\begin{theorem}
Let $\ws$ be a metric ANR space and let $f\colon\cs\to\ws$ be any map. Then 
$$\cat(\ws)\le\csec(f)\le\TC(f)<\csec(f)+\cat(\cs)\:.$$
\end{theorem}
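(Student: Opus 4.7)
The first two inequalities are essentially built in. For $\cat(\ws)\le\csec(f)$, given a filtration $A_0\subseteq\cdots\subseteq A_m=\ws$ realizing $\csec(f)$ with categorical sections $s_i\colon A_i-A_{i-1}\to\cs$ and deformations $G_i$, the composition $f\circ G_i$ contracts $A_i-A_{i-1}=f(s_i(A_i-A_{i-1}))$ to a point in $\ws$; since $\ws$ is a metric ANR, a Srinivasan-style extension argument promotes the contractibility over these locally closed strata to an open categorical cover of $\ws$, giving $\cat(\ws)\le m$. For $\csec(f)\le\TC(f)$, fix a basepoint $c_0\in\cs$ and slice a filtration $Q_0\subseteq\cdots\subseteq Q_n=\cs\times\ws$ realizing $\TC(f)$ along $\{c_0\}\times\ws$: the closed sets $A_i:=\{w\mid(c_0,w)\in Q_i\}$ filter $\ws$, and $s_i(w):=\alpha_i(c_0,w)(1)$ sections $f$ over $A_i-A_{i-1}$, with $H(w,t):=\alpha_i(c_0,w)(1-t)$ deforming the image $s_i(A_i-A_{i-1})$ to the point $c_0$.

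The main content is the upper bound $\TC(f)\le\csec(f)+\cat(\cs)-1$, which I would establish by the classical ``onion'' construction familiar from the proof of $\cat(X\times Y)\le\cat(X)+\cat(Y)-1$. Write $m=\csec(f)$, $k=\cat(\cs)$ and choose closed filtrations
$$\emptyset=A_0\subseteq\cdots\subseteq A_m=\ws,\qquad \emptyset=B_0\subseteq\cdots\subseteq B_k=\cs,$$
where $s_i$ with deformation $G_i\colon(A_i-A_{i-1})\times I\to\cs$ to a fixed point $c_i^*$ witnesses the categorical section over $A_i-A_{i-1}$, and $H_j\colon(B_j-B_{j-1})\times I\to\cs$ contracts $B_j-B_{j-1}$ in $\cs$ to a fixed point $c_j^\#$. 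Define
$$Q_\ell:=\bigcup_{i+j\le\ell+1}B_j\times A_i\qquad(\ell=0,1,\ldots,m+k-1),$$
a closed filtration of $\cs\times\ws$ with $Q_0=\emptyset$ and $Q_{m+k-1}=\cs\times\ws$. A direct check shows
$$Q_\ell-Q_{\ell-1}=\bigsqcup_{i+j=\ell+1}(B_j-B_{j-1})\times(A_i-A_{i-1})$$
and, using closedness of the $A_\bullet$ and $B_\bullet$ together with the constancy of the level sum, that each piece is clopen in $Q_\ell-Q_{\ell-1}$: a sequence in the $(i^*,j^*)$-piece converging to a limit in the $(i,j)$-piece must satisfy $i^*\ge i$ and $j^*\ge j$ by closedness, so $i^*+j^*=i+j=\ell+1$ forces $(i,j)=(i^*,j^*)$.

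On each piece I would assemble the section $\sigma_{i,j}(c,w)$ of $\pi_f$ as the concatenation $H_j(c,\cdot)\ast\gamma_{i,j}\ast\overline{G_i(w,\cdot)}$, where $\gamma_{i,j}$ is a fixed path in $\cs$ from $c_j^\#$ to $c_i^*$ (available because $\cs$ is path-connected) and $\overline{G_i(w,\cdot)}$ is the reverse of $G_i(w,\cdot)$, so that the resulting path starts at $c$ and terminates at $s_i(w)\in f^{-1}(w)$. By clopenness these glue to a continuous section over $Q_\ell-Q_{\ell-1}$, giving $\TC(f)\le m+k-1$.

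The main obstacle is not the onion construction itself but the initial reduction: I need closed filtrations with contractibility of successive differences both for $\csec(f)$ (immediate from the definition) and for $\cat(\cs)$. For the latter, an open categorical cover $\{U_j\}$ of $\cs$ must be shrunk, using normality of $\cs$, to a closed cover $\{F_j\}$ and reassembled as $B_j:=F_1\cup\cdots\cup F_j$, so that $B_j-B_{j-1}\subseteq F_j$ inherits a contraction in $\cs$. This mild topological hypothesis on $\cs$ is harmless in the robotics setting where $\cs$ is a manifold.
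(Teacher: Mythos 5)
Your proposal is correct and follows essentially the same route as the paper: slicing the $\TC(f)$-filtration at a fixed $c_0$ to get categorical sections of $f$, pushing the categorical deformations forward by $f$ to bound $\cat(\ws)$, and the product ("onion") filtration $Q_\ell=\bigcup_{i+j\le \ell+1}B_j\times A_i$ with sections given by concatenating a contraction in $\cs$, a connecting path, and a reversed categorical deformation, glued over mutually separated pieces. The only difference is that you spell out the normality argument converting an open categorical cover of $\cs$ into a closed filtration, a step the paper takes for granted.
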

\begin{proof}
If $\TC(f)=n$, then we have shown before that there exists a cover of $\ws$ by $n$ sets, each admitting a categorical
 section to $f$, therefore $\TC(f)\ge\csec(f)$. 

As a preparation for the proof of the upper estimate, assume that $C\subseteq \cs$ admits a deformation 
$H\colon C\times I\to\cs$ to a point $c_0\in\cs$, and that $A\subseteq \ws$ admits a categorical section $s\colon A\to\cs$ 
with a deformation $K\colon s(A)\times I\to \cs$ to a point $c_1\in\cs$. In addition, let $\gamma\colon I\to\cs$ be a path
 from $c_0$ to $c_1$.
Then we may define a partial section $\alpha\colon C\times A\to \pcs$ by the formula
$$\alpha(c,w)(t):=\left\{\begin{array}{ll}
H(c,3t) & 0\le t\le 1/3\\
\gamma(3t-1) & 1/3\le t\le 2/3\\
K(s(w),2-3t)& 2/3\le t\le 1
\end{array}\right.$$
By assumption, there is a filtration of $\cs$ by closed sets 
$$\emptyset=C_0\subseteq C_1\subseteq\ldots\subseteq C_{\cat(\cs)}=\cs,$$
such that each difference $C_i-C_{i-1}$ deforms to a point in $\cs$,
and there is also a filtration of $\ws$ by closed sets
$$\emptyset=A_0\subseteq A_1\subseteq\ldots\subseteq A_{\csec(f)}=\ws,$$
such that each difference $A_i-A_{i-1}$ admits a categorical section to $f$. 
Then we can define closed sets 
$$Q_k=\bigcup_{i+j=k} C_i\times A_j$$
that form a filtration 
$$\emptyset=Q_1\subseteq Q_2\subseteq\cdots\subseteq Q_{\cat(\cs)+\csec(f)}=\cs\times\ws\;.$$ 
Each difference 
$$Q_k-Q_{k-1}=\bigcup_{i+j=k} (C_i-C_{i-1})\times (A_j-A_{j-1})$$
is a mutually separated union of sets that admit continuous partial sections, and so there exists a continuous 
partial section plan on each $Q_k-Q_{k-1}$. We conclude that $\TC(f)$ is less then or equal to $\cat(\cs)+\csec(f)-1$.
\end{proof}

Topological complexity of a map can be used to model several important situations in topological robotics. 
In the rest of this section we describe some typical examples:

\begin{example}
The identity map on $X$ is a fibration, so if $X$ is a metric ANR, then $\TC(X)=\TC(\Id_X)$. 
\end{example}

\begin{example}
In the motion planning of a device with several moving components one is often interested only in the position of a part of
 the system. This situation may be modelled by considering 
the projection $p\colon\cs\to\cs'$ of the configuration space of the entire system to the configuration space of the relevant
 part. Then $\TC(p)$ measures the complexity of robust motion planning
in $\cs$ but with the objective to arrive at a requested state in $\cs'$. A similar situation which often arises and can be
 modelled in this way is when the device can move and revolve in three 
dimensional space (so that 
its configuration space $\cs$ is a subspace of $\RR^3\times SO(3)$), but we are only interested in its final position (or
 orientation), so we consider the complexity of the projection $p\colon \cs\to\RR^3$  
(or $p\colon\cs\to SO(3)$). 
\end{example}

\begin{example}
Our main motivating example is the complexity of the forward kinematic map of a robot as introduced in \cite{Pav:CFKM}. A
 mechanical device consists of rigid parts connected by joints. As we explained in the first part of this paper 
(see also \cite[Section 1.3]{Waldron-Schmiedeler}), although 
there are many types of joints  only two of them are easily actuated by motors --
 \emph{revolute} joints (denoted as R) and \emph{prismatic} or \emph{sliding} 
joints (denoted as P). Revolute joints allow rotational movement so their states can be described by points on the circle $S^1$. Sliding joints allow linear movement with motion limits, so their states are 
described by points of the interval $I$. Other joints are usually passive and only restrict the motion of the device, so a typical configuration space  $\cs$ of a system with $m$ revolute
joints and $n$ sliding joints is a subspace of the product $(S^1)^m\times I^n$. Motion of the joints results in the spatial displacement of the device, in particular of its end-effector. 
The \emph{pose} of the end-effector is given by its spatial position and orientation, so the \emph{working space} $\ws$ of the device is a subspace of $\RR^3\times SO(3)$ (or a subspace of $\RR^2\times SO(2)$ if
the motion of the device is planar). In the following examples, for simplicity, we will disregard the orientation of the end-effector.

Given two revolute joints that are pinned together so that their axes of rotation are parallel, the configuration space 
is $\cs=S^1\times S^1$ and 
the working space is an annulus $\ws=S^1\times I$. The forward kinematic map $f\colon\cs\to\ws$ can be given explicitly 
in terms of polar coordinates. 
This configuration is depicted in Figure \ref{fig: two planar joints} with the mechanism and its working space overlapped,
and the complexity of the corresponding kinematic map is 3, see \cite[4.2]{Pav:CFKM}.

\begin{figure}[ht]
    \centering
    \includegraphics[scale=0.5]{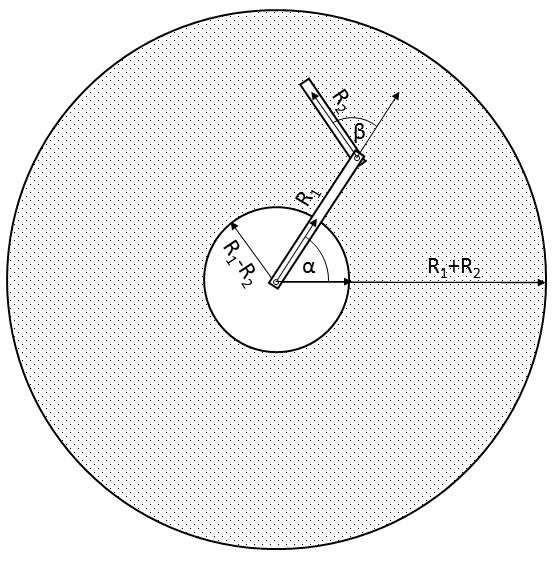}
    \caption{(RR) planar configuration. Position of the arm is completely described by the angles $\alpha$ and $\beta$, $\cs=S^1\times S^1$, $\ws=S^1\times[R_1-R_2,R_1+R_2]$.}
    \label{fig: two planar joints}
\end{figure}

If instead we pin the joints so that the axes of rotation are orthogonal then we obtain the so-called universal or Cardan
 joint. The configuration spaces is a product of circles
$\cs=S^1\times S^1$, but the working space is the two dimensional sphere and the forward kinematic map may be expressed by
 geographical coordinates (see Figure \ref{fig: two perpendicular joints}). By the computation in \cite[4.3]{Pav:CFKM} 
the complexity of the kinematic map for the universal joint is either 3 or 4 (we do not know the exact value).
\\
\begin{figure}[ht]
    \centering
    \includegraphics[scale=0.4]{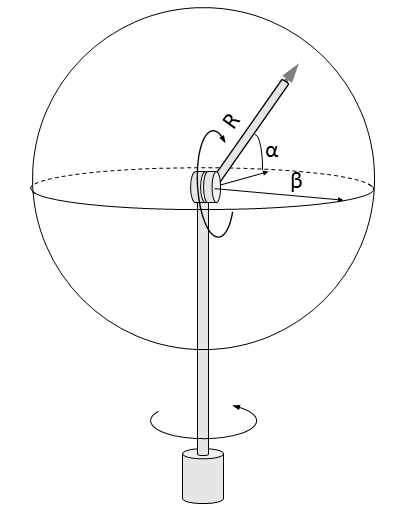}
    \caption{(RR) universal joint. $\cs=S^1\times S^1$, $\ws=S^2$.}
    \label{fig: two perpendicular joints}
\end{figure}

One of the most commonly used joint configurations in robotics is SCARA (Selective Compliant Assembly Robot Arm), which is based on the (RRP) configuration as in Figure \ref{fig: SCARA}, and is
sometimes complemented with a screw joint or even with a 3 degrees-of-freedom robot hand. The configuration space is $\cs=S^1\times S^1\times I$ and the working space is $\ws=S^1\times I\times I$. 
The forward kinematic map may be easily given in terms of cylindrical coordinates. Since the
kinematic map is the product of the kinematic map for the planar two-arm mechanism and the identity map
on the interval, its complexity is equal to 3.  
\begin{figure}[ht]
    \centering
    \includegraphics[scale=0.5]{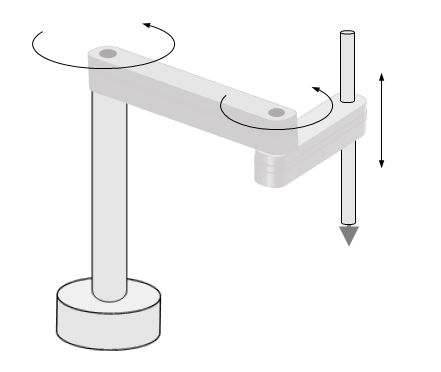}
    \caption{(RRP) SCARA design. $\cs=S^1\times S^1\times I$, $\ws=S^1\times I\times I$.}
    \label{fig: SCARA}
\end{figure}
\end{example}

The last computation can be easily generalized to products of arbitrary maps and we omit the proof since it follows the 
standard lines as in \cite[3.8]{Pav:FATC}.
\begin{proposition}
The product of maps $f\colon\cs\to\ws$ and $f'\colon \cs'\to\ws'$ satisfies the relation
$$\max\{\TC(f),\TC(f')\}\le\TC(f\times f')<\TC(f)+\TC(f')\;.$$
\end{proposition}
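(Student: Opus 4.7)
The statement splits into a lower bound $\max\{\TC(f),\TC(f')\}\le\TC(f\times f')$ and an upper bound $\TC(f\times f')\le\TC(f)+\TC(f')-1$. Both are parallel to the classical product inequality for topological complexity, and the main task is to verify that the closed-filtration definition of $\TC(f)$ interacts well with products.

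For the lower bound, the plan is to pull back an optimal filtration along a canonical closed embedding. Fix $c'_0\in\cs'$, set $w'_0:=f'(c'_0)$, and consider
$$\iota\colon\cs\times\ws\hookrightarrow(\cs\times\cs')\times(\ws\times\ws'),\qquad (c,w)\mapsto\bigl((c,c'_0),(w,w'_0)\bigr).$$
Given a closed filtration $\emptyset=R_0\subseteq R_1\subseteq\cdots\subseteq R_k=(\cs\times\cs')\times(\ws\times\ws')$ realizing $\TC(f\times f')=k$, together with partial sections $\tau_i$ of $\pi_{f\times f'}$ over each $R_i-R_{i-1}$, the preimages $Q_i:=\iota^{-1}(R_i)$ form a closed filtration of $\cs\times\ws$ of length $k$. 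On each $Q_i-Q_{i-1}=\iota^{-1}(R_i-R_{i-1})$ the composition $\tau_i\circ\iota$ takes values in $(\cs\times\cs')^I$, starts at $(c,c'_0)$, and ends at a point of $(f\times f')^{-1}(w,w'_0)$; projecting onto the $\cs$-factor yields a partial section of $\pi_f$. This gives $\TC(f)\le k$, and the symmetric argument handles $\TC(f')$.

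For the upper bound, let $m=\TC(f)$ and $n=\TC(f')$ with closed filtrations $\{Q_i\}$ on $\cs\times\ws$ and $\{Q'_j\}$ on $\cs'\times\ws'$ carrying sections $\sigma_i$ and $\sigma'_j$. After identifying $(\cs\times\cs')\times(\ws\times\ws')$ with $(\cs\times\ws)\times(\cs'\times\ws')$, set
$$R_k:=\bigcup_{i+j=k}Q_i\times Q'_j\quad(0\le k\le m+n).$$
Since $Q_0=Q'_0=\emptyset$, both $R_0$ and $R_1$ are empty, so after reindexing the filtration has $m+n-1$ nontrivial differences. On each $R_k-R_{k-1}$ the pairwise product $(\sigma_i,\sigma'_j)$ defines a partial section of $\pi_{f\times f'}$ on the piece $(Q_i-Q_{i-1})\times(Q'_j-Q'_{j-1})$ for every compatible $(i,j)$, yielding $\TC(f\times f')\le m+n-1<m+n$.

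The main obstacle is the gluing step in the upper bound: one must verify that $R_k-R_{k-1}$ decomposes as the union of the pieces $(Q_i-Q_{i-1})\times(Q'_j-Q'_{j-1})$ with $i+j=k$, and that these pieces are mutually separated within $R_k-R_{k-1}$. For distinct compatible pairs with $i_1<i_2$, the inclusion $Q_{i_1}\subseteq Q_{i_2-1}$ forces the closure of the first piece (contained in $Q_{i_1}\times Q'_{j_1}$) to be disjoint from the second, so the pointwise-defined section is automatically continuous. Modulo this verification the argument is routine and runs exactly parallel to \cite[3.8]{Pav:FATC}.
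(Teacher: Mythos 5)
Your proof is correct and follows essentially the same route as the paper's (the paper omits it, pointing to \cite[3.8]{Pav:FATC}, but the construction is the same one used in the proof of the $\csec$ estimate there): a slice inclusion $(c,w)\mapsto((c,c'_0),(w,w'_0))$ for the lower bound, and the convolution filtration $R_k=\bigcup_{i+j=k}Q_i\times Q'_j$ with the mutually separated decomposition of $R_k-R_{k-1}$ for the upper bound. The only point worth spelling out is that mutual separation requires both $\overline{A}\cap B=\emptyset$ and $A\cap\overline{B}=\emptyset$; you verify one direction, and the other follows by applying the same argument to $j_2<j_1$.
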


%Observe that in all of the above examples the forward kinematic map is smooth but it is not a fibration, which. This is not a coincidence: Gottlieb \cite{Gottlieb} proved that, 
%with only a few exceptions, forward kinematic maps are never fibrations. 

\begin{example} 
Robotic devices are normally employed to perform various functions and it often happens that different states of the device are functionally equivalent (say for grasping, welding, spraying or other purposes as in Figure \ref{fig: funct.eq.}). 
\begin{figure}[ht]
    \centering
    \includegraphics[scale=0.5]{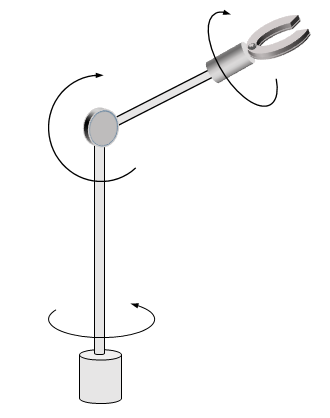}
    \caption{(RRR) design with functional equivalence for grasping.}
    \label{fig: funct.eq.}
\end{figure} 
Functional equivalence is often described by the action of some group $G$ on $\cs$ and there are several versions 
of equivariant topological complexity -- see \cite{Colman-Grant}, \cite{Lubawski-Marzantowicz},  \cite{Dranishnikov} 
or \cite{Błaszczyk-Kaluba}. Some of them require motion plans to be equivariant maps defined on invariant subsets of
$\cs\times\cs$, while other consider arbitrary paths that are allowed to 'jump' within the same orbit 
(see \cite[Section 2.2]{Błaszczyk-Kaluba} for an overview and comparison of different approaches). 
However, none of the mentioned papers give a convincing interpretation in terms of a motion planning problem for 
a mechanical system. 

We believe that the navigation planning for a device with a configuration space $\cs$ in which different 
configurations can have the same functionality should be modelled in terms of the complexity of the quotient map
 $q\colon\cs\to\cs/\sim$ associated to an equivalence relation on $\cs$ 
 (or $q\colon\cs\to\cs/G$ if the equivalence by a group action).
Then  $\TC(q)$ can be interpreted as  a measure of the difficulty in constructing a robust motion plan that steers 
a device from a given initial position to any of the final positions that have the required functionality. 
It would be interesting to relate this concept with the above mentioned versions of equivariant topological complexity.
\end{example}

%============================================================================================================================================================
\section{Instability of robot manipulation}
%============================================================================================================================================================
\label{sec:Instability of robot manipulation}

Let us again consider the robot manipulation problem determined by a forward kinematic map $f\colon\cs\to\ws$. 
A  \emph{manipulation algorithm} for the given device is a rule that to every 
initial datum $(c,w)\in\cs\times\ws$ assigns a path in $\cs$ starting at $c$ and ending at $c'\in f^{-1}(w)$. In other words,
 a manipulation algorithm is a (possibly discontinuous)
section of $\pi_f\colon\pcs\to \cs\times\ws$, patched from one or more robust manipulation plans. 

Let $\alpha_i\colon Q_i\to\pcs$ be a collection of robust manipulation plans such that the $Q_i$ cover $\cs\times\ws$. In general
 the domains $Q_i$ may overlap, 
so in order to define a manipulation algorithm for the device we must decide which manipulation plan to apply for a given
 input datum $(c.w)\in\cs\times\ws$. 
We can avoid this additional step by partitioning $\cs\times\ws$ into disjoint domains, e.g. by defining $Q'_1:=Q_1$,
 $Q'_2:=Q_2-Q_1$, $Q'_3:=Q_3-Q_2-Q_1,\ldots$,  
and restricting the respective manipulation plans  accordingly.

% Clearly $\TC(f)$ equals the minimal $n$ such that
% $\cs\times\ws$ can be partitioned
%into $n$ disjoint subsets, each admitting a continuous section to $\pi_f$. 

Since $\cs$ and $\ws$ are by assumption path-connected, if we partition $\cs\times\ws$ into several domains, then there exist
 arbitrarily close pairs of initial data $(c,w)$ and $(c',w')$ 
that belong to different domains.  This can cause instability of the robot device guided by such a manipulation
 algorithm in the sense that small perturbations of 
the input may lead to completely different behaviour  of the device. The problem is of particular significance when the input
 data is determined up to some approximation or rounding
because the instability may cause the algorithm to choose an inadequate manipulation plan. Also, a level of unpredictability
 significantly complicates coordination in a group of
robotic devices, because one device cannot infer the action of a collaborator just by knowing it's manipulation algorithm and
 by determining its position.  
Farber \cite{Farber:IRM} observed that for any motion planning algorithm the number of different choices that are available
 around certain
points  in $X$ increases with the topological complexity of $X$. He defined the \emph{order of instability} of a motion
 planning algorithm for $X$ at a point 
$(x,x')\in X\times X$ to be the number of motion plan domains that are intersected by every neighbourhood of $(x,x')$. Then he
 proved (\cite[Theorem 6.1]{Farber:IRM}) 
that for every motion planning algorithm on $X$ there is at least one point in $X\times X$ whose order of instability is at
 least $\TC(X)$.   

We are going to state and prove a similar result for the topological complexity of a forward kinematic map. Our proof is based
 on the approach used by Fox \cite{Fox} to tackle a similar question 
on Lusternik-Schnirelmann category.

\begin{theorem}
Let $f\colon\cs\to\ws$ be any map and let $\cs\times\ws=Q_1\sqcup\ldots\sqcup Q_n$ be a partition of $\cs\times\ws$ into disjoint subsets, each of them
admitting a partial section $\alpha_i\colon Q_i\to\pcs$ of $\pi_f$. Then there exists a point $(c,w)\in\cs\times\ws$ such that every neighbourhood of $(c,w)$ 
intersects at least $\TC(f)$ different domains $Q_i$. 
\end{theorem}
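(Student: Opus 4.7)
The plan is to argue by contradiction. Suppose every $(c,w)\in\cs\times\ws$ admits a neighbourhood meeting fewer than $\TC(f)$ of the $Q_i$. Writing $N(c,w)=\{i:(c,w)\in\overline{Q_i}\}$, the order of instability at $(c,w)$ is precisely $|N(c,w)|$ (for each $i\notin N(c,w)$ one can remove the offending $Q_i$ by a local choice of neighbourhood, and only finitely many $Q_i$ are involved), so the hypothesis becomes $K:=\max_{(c,w)}|N(c,w)|<\TC(f)$. My aim is to exhibit a closed filtration $\emptyset=P_0\subseteq P_1\subseteq\cdots\subseteq P_K=\cs\times\ws$ such that $\pi_f$ admits a continuous partial section over each difference $P_k-P_{k-1}$; this forces $\TC(f)\le K<\TC(f)$, the desired contradiction.

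For the filtration I would take
$$P_k\;=\;\{(c,w):|N(c,w)|\ge K-k+1\}\;=\;\bigcup_{|S|=K-k+1}\;\bigcap_{i\in S}\overline{Q_i},$$
a finite union of finite intersections of closed sets, hence closed, with $P_0=\emptyset$ by the bound $|N|\le K$ and $P_K=\cs\times\ws$ because $|N|\ge 1$. The difference $P_k-P_{k-1}$ is the stratum $\{|N|=K-k+1\}$, which decomposes as $\bigsqcup_{|S|=K-k+1}V_S$ with $V_S=\{N=S\}$. The key separation property is that, for $S\ne S'$ of the same size, $\overline{V_S}\cap V_{S'}=\emptyset$: a point $x\in V_{S'}$ that is a limit of a sequence in $V_S$ satisfies $x\in\overline{Q_i}$ for every $i\in S$, so $S\subseteq N(x)=S'$, forcing $S=S'$. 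Consequently each $V_S$ is clopen in $P_k-P_{k-1}$, and it suffices to build a continuous section $\sigma_S\colon V_S\to\pcs$ of $\pi_f$ on each $V_S$ separately.

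The hard step is producing these $\sigma_S$. The naive idea of piecing together the $\alpha_i|_{V_S\cap Q_i}$ over $i\in S$ is in general discontinuous: a sequence in $V_S\cap Q_i$ may converge to a point of $V_S\cap Q_j$ without the corresponding paths in $\pcs$ converging compatibly. Following Fox's approach for Lusternik--Schnirelmann category, I would sidestep patching by extending. Assuming the ambient niceness natural to the setting (say $\cs$, and hence $\pcs$, a metric ANR, as holds in the applications), Srinivasan's extension theorem provides, for each $i$, a continuous extension of $\alpha_i$ to an open neighbourhood of $Q_i$; with enough regularity this upgrades to a section $\overline{\alpha_i}\colon\overline{Q_i}\to\pcs$. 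Since $V_S\subseteq\bigcap_{i\in S}\overline{Q_i}$, the restriction $\overline{\alpha_i}|_{V_S}$ for any single $i\in S$ serves as the desired $\sigma_S$.

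The clopen decomposition of $P_k-P_{k-1}$ then glues the $\sigma_S$ into a continuous partial section on the whole difference, so the filtration $\{P_k\}$ witnesses $\TC(f)\le K$, contradicting $K<\TC(f)$. I expect the extension step producing $\overline{\alpha_i}$ on $\overline{Q_i}$ to be the most delicate technical point, since without some regularity of $\cs,\pcs$ the sections need not extend continuously up to closures and the patching obstruction returns.
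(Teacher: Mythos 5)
Your argument reproduces the paper's proof almost exactly: your filtration $P_k$ is the paper's $R_{K-k+1}$ (the paper argues directly that the largest $m$ with $R_m\ne\emptyset$ is at least $\TC(f)$ rather than by contradiction, but that is cosmetic), your strata $V_S$ are its sets $S_I$, and your separation lemma is proved the same way. The one point where you diverge is the step you yourself flag as delicate: producing a continuous section of $\pi_f$ over a single stratum $V_S$. The paper simply sets $\beta_k|_{S_I}:=\alpha_i|_{S_I}$ for one chosen $i\in I$; you are right to be uneasy here, because $V_S$ lies in $\overline{Q_i}$ but not necessarily in $Q_i$, and in general it meets several of the sets $Q_j$ with $j\in S$, so neither restricting a single $\alpha_i$ nor naively patching the $\alpha_j|_{V_S\cap Q_j}$ is automatically legitimate.

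However, your proposed repair does not close this gap. Srinivasan's theorem extends $\alpha_i$ to \emph{some} open neighbourhood of $Q_i$, and such a neighbourhood need not contain $\overline{Q_i}$; more importantly, no amount of regularity of $\cs$ and $\pcs$ will ``upgrade'' this to a section $\overline{\alpha_i}\colon\overline{Q_i}\to\pcs$. Take $\cs=\ws=S^1$ and $f=\Id$, let $Q_1$ be the complement of the antidiagonal $\{(x,-x)\}$ in $S^1\times S^1$ and $Q_2$ the antidiagonal itself. Both admit continuous sections of $\pi_f=\pi$ (shortest geodesics on $Q_1$, a fixed choice of half-circle on $Q_2$), yet $\overline{Q_1}=S^1\times S^1$ admits no section at all, since $\TC(S^1)=2$. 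So the extension step on which the rest of your argument rests fails in exactly the generality you need, and the proof is incomplete at the very place where the paper's own formula $\alpha_i|_{S_I}$ is silently assuming more than the hypotheses give. To finish, you must produce a continuous section over $V_S$ by some other means, or restrict the class of admissible partitions $\{Q_i\}$ (for instance to ones where each $V_S$ is contained in a single $Q_i$, or where the $\alpha_i$ are given on the closures); as written, the asserted $\overline{\alpha_i}$ does not exist.
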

\begin{proof}
If every neighbourhood of $(c,w)$ intersects $Q_i$ then $(c,w)$ is in $\overline Q_i$, the closure of $Q_i$. Therefore, 
we must prove that there exist $\TC(f)$ different 
domains $Q_i$ such that their closures have non-empty intersection. To this end for each $k=1,2,\ldots, n$ we define 
$R_k$ as the set of points in
$\cs\times\ws$ that are contained in at least $k$ sets $\overline Q_i$. Each $R_k$ is a union of intersections of 
sets $\overline Q_i$, hence it is closed, and we obtain a filtration 
$$\cs\times\ws=R_1\supseteq R_2\supseteq\ldots R_m\supseteq\emptyset,$$
where $m$ is the biggest integer such that $R_m$ is non-empty.
For each $k=1,\ldots,m$ the difference $R_k-R_{k+1}$ consists of points that are contained in 
exactly $k$ sets $\overline Q_i$.
To construct a manipulation plan over $R_k-R_{k+1}$ let us define sets $S_I$ for every subset of indices 
$I\subseteq \{1,\ldots,n\}$ 
as the set of points that are contained in $\overline Q_i$ if $i\in I$ and are not contained in $\overline Q_i$ if 
$i\notin I$. It is easy to check that $R_k-R_{k+1}$ is the disjoint union of sets $S_I$ where $I$ ranges over
all $k$-element subsets of $\{1,\ldots,n\}$. Even more, if $I$ and $J$ are different but of the same
 cardinality, then the closure of $S_I$ does not intersect 
$S_J$ (i.e., $S_I$ and $S_J$ are \emph{mutually separated}). In fact, there is an index $i$ contained in $I$ but not 
in $J$, and clearly $\overline S_I\subseteq \overline Q_i$
while $S_J\cap \overline Q_i=\emptyset$. Since for $I$ of fixed cardinality $k$ the sets $S_I$ are mutually separated 
we can patch a continuous section $\beta_k\colon R_k-R_{k+1}\to \pcs$ to $\pi_f$ 
by choosing $i\in I$ for each $I$ and defining $\beta_k|_{S_I}:=\alpha_i|_{S_I}$. 

By definition of $\TC(f)$ we must have $m\ge\TC(f)$ so
 that $R_{\TC(f)}$ is non-empty and there exists a point in $\cs\times\ws$ that is
contained in at least $\TC(f)$ different sets $\overline Q_i$.
\end{proof}

The order of instability of a manipulation algorithm with $n$ robust manipulation plans clearly cannot exceed $n$, so there is always a cover of $\cs\times\ws$ by sets admitting section to $\pi_f$, whose order 
of instability is exactly $\TC(f)$. As a corollary we obtain the characterization of $\TC(f)$: it is the minimal $n$ for which every manipulation plan on $\cs\times\ws$ has order 
of instability at least $n$. 

In applications the robot manipulation problem is often solved numerically, using gradient flows or successive approximations. Again, one may identify domains of continuity as well as regions 
of instability. It would be an interesting project to compare different approaches with respect to their order of instability.


\begin{thebibliography}{99}
\bibitem{Baker}
D.~R.~Baker, \emph{Some Topological Problems in Robotics}, The Mathematical Intelligencer, 12 (1990) 66–-76.
\bibitem{Baker-Wampler}
D.~R.~Baker, C.~W.~Wampler, \emph{On the inverse kinematics of redundant manipulators}, International Journal of
Robotics Research, {\bf 7} (1988), 3--21.
\bibitem{Błaszczyk-Kaluba}
Z.~Błaszczyk, M.~Kaluba, \emph{Yet another approach to equivariant topological complexity}, arXiv: 1510.08724 (2015).
%\bibitem{RMPC}
%M. Brady, J.M. Hollerbach, T.L. Johnson, T. Lozano-Pérez, M.T. Mason (eds.), \emph{Robot Motion: Planning and Control} (MIT Press, 1982).
\bibitem{RS}
M. Brady (ed.), \emph{Robotics Science} (MIT Press, 1989).
%\bibitem{Canny 88}
%J.F. Canny: \emph{The Complexity of Robot Motion Planning}, (MIT Press, Cambridge 1988)
\bibitem{Kinematically Redundant Manipulators}
S.~Chiaverini, G.~Oriolo, I.~D.~Walker, \emph{Kinematically Redundant Manipulators}, in B. Siciliano, O. Khatib
 (eds.), \emph{Springer Handbook of Robotics}, Chapter 1, (Springer, Berlin, 2008). 
\bibitem{Colman-Grant}
H.~Colman, M.~Grant, \emph{Equivariant topological complexity}, Algebr. Geom. Topol. {\bf 12} (2012), 2299–-2316.
\bibitem{CLOT}
O. Cornea, G. Lupton, J. Oprea, D. Tanr\'e, \emph{Lusternik-Schnirelmann category}, Mathematical Surveys and Monographs 
{\bf 103} (American Mathematical Society, Providence, RI, 2003).
\bibitem{D-H}
J. Denavit, R.S. Hartenberg, \emph{A kinematic notation for lower-pair mechanisms based on matrices}
Trans ASME J. Appl. Mech. {\bf 23} (1955), 215–-221.
\bibitem{Donelan}
P.~S.~Donelan, \emph{Singularities of robot manipulators}, in Singularity Theory, pp. 189–217, World Scientific, 
Hackensack NJ.
\bibitem{Dranishnikov}
A.~Dranishnikov, \emph{On topological complexity of twisted products}, Topology Appl. 179 (2015), 74–-80. 
\bibitem{Ehresmann}
C. Ehresmann, Les connexions infinit\'esimales dans un espace fibr\'e diff\'erentiable, in
 \emph{Colloque de Topologie}, Bruxelles (1950), 29--55.
\bibitem{Farber:TCMP}
M. Farber, Topological Complexity of Motion Planning, \emph{Discrete Comput Geom}, {\bf 29} (2003), 211--221.
\bibitem{Farber:IRM}
M. Farber, Instabilities of robot motion, \emph{Top. Appl.} {\bf 140} (2004), 245–-266
\bibitem{Farber:ITR}
M. Farber, \emph{Invitation to topological robotics}, (EMS Publishing House, Zurich, 2008).
\bibitem{Fox}
R.H. Fox, \emph{On the Lusternik-Schnirelmann category}, Ann. of Math. {\bf 42} (1941), 333--370. 
\bibitem{Gottlieb:IEEE}
D. Gottlieb, Robots and topology, \emph{Proc. 1986 IEEE International Conference on Robotics and Automation}, 
Vol 3., 1689--1691.
\bibitem{Gottlieb:RFB}
D.~Gottlieb, \emph{Robots and Fibre Bundles}, Bull. Soc. Math. Belgique {\bf 38} (1986), 219--223.
\bibitem{Gottlieb}
D. Gottlieb, Topology and the Robot Arm, \emph{Acta Applicandae Math.} {\bf 11} (1988), 117--121.
%\bibitem{Hatcher 02}
%A. Hatcher: \emph{Algebraic Topology}, (Cambridge Univ. Press, Cambridge 2002).
\bibitem{Hollerbach} % The International Journal of Robotics Research
J.M. Hollerbach, \emph{Optimal kinematic design for a seven degree of freedom manipulator}, 2nd International Symposium 
on Robotics Research, Kyoto, (Japan, 1984).
%\bibitem{Hollerbach 89}
%J.M. Hollerbach, \emph{Kinematics and Dynamics for Control}, in M. Brady (ed.), \emph{Robotics Science}, Chapter 10 (MIT Press, 1989).
%\bibitem{Kavraki-LaValle}
%L.E. Kavraki, S.M. LaValle, \emph{Motion Planning}, in B. Siciliano, O. Khatib (eds.), \emph{Springer Handbook of Robotics}, Chapter 5, (Springer, Berlin, 2008).
\bibitem{Lubawski-Marzantowicz}
W.~Lubawski, W.~Marzantowicz, \emph{Invariant topological complexity}, Bull. London Math. Soc. {\bf 47} (2014), 101–-117.
\bibitem{MIRM}
R.~M.~Murray, Z.~Li, S.~Shankar Sastry, \emph{A Mathematical Introduction to Robotic Manipulation}, (CRC Press, 1004)
\bibitem{Pav:FATC}
P.~Pave\v si\'c, \emph{Formal aspects of topological complexity}, in A.K.M. Libardi (ed.), \emph{Zbirnik prac´ 
Institutu matematiki NAN Ukraini} ISSN 1815-2910, T. 6, (2013), 56--66.
\bibitem{Pav:CFKM}
P.~Pave\v si\'c, \emph{Complexity of the Forward Kinematic Map}, to appear.
\bibitem{Schwarz}
A.S.~Schwarz, \emph{The genus of a fiber space}, Amer. Math. Soc. Transl. (2) {\bf 55} (1966), 49--140.
%\bibitem{Reif 79}
%J.H. Reif: \emph{Complexity of the mover’s problem and generalizations}, IEEE Symp. Found. Comput. Sci. (1979), 421–427 
\bibitem{Tutorial}
B.~Siciliano, \emph{Kinematic Control of Redundant Manipulator: A Tutorial}, Journal of Intelligent and Robotic Systems 
{\bf 3} (1990), 201--212. 
\bibitem{Srinivasan}
T.~Srinivasan, \emph{The Lusternik-Schnirelmann category of metric spaces }, \emph{Topology Appl.} 167 (2014), 87--95.
%\bibitem{Handbook}
%B. Siciliano, O. Khatib (eds.), \emph{Springer Handbook of Robotics}, (Springer, Berlin, 2008).
\bibitem{Waldron-Schmiedeler}
K. Waldron, J. Schmiedeler, \emph{Kinematics}, in B. Siciliano, O. Khatib (eds.), 
\emph{Springer Handbook of Robotics}, Chapter 1, (Springer, Berlin, 2008). 
\end{thebibliography}
\end{document}